\patchcmd{\section}{\normalfont}{\normalfont\Large}{}{}
\patchcmd{\section}{\scshape}{\bfseries}{}{}
\let\originalforall=\forall
\renewcommand{\forall}{\mathop{\vcenter{\hbox{\Large$\originalforall$}}}}
\let\originalexists=\exists
\renewcommand{\exists}{\mathop{\vcenter{\hbox{\Large$\originalexists$}}}}
\DeclareMathOperator\supp{supp}
\newtheorem{thm}{Theorem}[]
\newtheorem*{thm*}{Theorem}
\newtheorem{lem}[thm]{Lemma}
\date{}
\title{Wiener-Pitt sets for compact Abelian groups}
\author{Przemys{\l}aw Ohrysko}
\address{University of Warsaw, Institute of Mathematics, Banacha 2A, 02-097 Warsaw, Poland}
\email{p.ohrysko@gmail.com}
\author{Tom Sanders}
\address{Institute of Mathematics, University of Oxford, United Kingdom}
\email{tom.sanders@maths.ox.ac.uk}
\author{Micha{\l} Wojciechowski}
\address{Institute of Mathematics, Polish Academy of Sciences, 00-656 Warszawa, Poland}
\email{miwoj.impan@gmail.com}
\begin{document}
\begin{abstract}
Suppose that $G$ is a compact Hausdorff Abelian group. We say $\mu \in M(G)$ is strongly continuous if $|\mu|(x+H)=0$ for any $x \in G$ and any $H \leq G$ that is closed and of infinite index.

We prove that for any sufficiently rapidly decreasing sequence $(a_{n})_{n=1}^{\infty}\in c_{0}(\mathbb{N})$, for every strongly continuous $\mu\in M(G)$ with $\|\mu\| \leq 1$ and $\widehat{\mu}(\widehat{G})\subset \{a_n: n \in \mathbb{N}\}\cup\{0\}$, the measure $\mu\ast\mu$ is absolutely continuous with respect to Haar measure on $G$. This implies that $\mu$ does not exhibit the so-called Wiener-Pitt phenomenon.

The paper is a continuation of investigations started in \cite{ow}.
\end{abstract}
\subjclass[2010]{Primary 43A10; Secondary 43A25.}

\keywords{Natural spectrum, Wiener--Pitt phenomenon, Fourier--Stieltjes algebras.}
\maketitle
\section{Introduction}
Throughout the paper $G$ is a locally compact Hausdorff Abelian group and $\widehat{G}$ denotes its (locally compact Hausdorff Abelian) dual group.

By $M(G)$ we denote the unital Banach algebra of all complex-valued regular Borel measures on $G$ equipped with the total variation norm and the convolution product, and for $\mu \in M(G)$ we write $\sigma(\mu)$ for the spectrum of $\mu$ in $M(G)$.

We write $\Delta(M(G))$ for the Gelfand space of $M(G)$, that is the set of not-identically-zero multiplicative and linear maps $M(G) \rightarrow \mathbb{C}$, equipped with the weak$^*$-topology, and in particular we have
\begin{equation}\label{specchar}
\sigma(\mu)=\{\chi(\mu): \chi \in \Delta(M(G))\}.
\end{equation}
The Fourier-Stieltjes transform of $\mu\in M(G)$ is the function $\widehat{\mu}:\widehat{G}\rightarrow\mathbb{C}$ given by:
\begin{equation*}
\widehat{\mu}(\gamma)=\int_{G}\gamma(-x)d\mu(x)\text{ for }\gamma\in\widehat{G}.
\end{equation*}
This gives rise to the following embedding
\begin{equation}\label{embed}
\widehat{G} \rightarrow \Delta(M(G)); \gamma \mapsto (M(G) \rightarrow \mathbb{C}; \mu \mapsto \widehat{\mu}(\gamma)).
\end{equation}
This is an injection -- a measure is determined by its Fourier-Stieltjes transform -- and when $G$ is discrete it is a surjection, but more generally the image is not even dense in the codomain. Indeed, it was shown \cite{owg} that if $G$ is non-discrete then $\Delta(M(G))$ is not separable, whereas there are non-discrete $G$ with countable dual groups \emph{e.g.\ }$G=\mathbb{T}$.

In view of (\ref{specchar}) and (\ref{embed}) certainly $\widehat{\mu}(\gamma) \in \sigma(\mu)$ for all $\gamma \in \widehat{G}$, and since $\sigma(\mu)$ is compact, we have
\begin{equation*}
\overline{\widehat{\mu}(\widehat{G})}\subset\sigma(\mu).
\end{equation*}
If we have equality in this inclusion then following \cite[p618]{Zafran}, $\mu$ is said to have a \textbf{natural spectrum}.

We write $L^1(G)$ for the closed ideal of $M(G)$ of measures that are absolutely continuous with respect to Haar measure on $G$. Any character on $M(G)$ restricts to a multiplicative and linear map $L^1(G) \rightarrow \mathbb{C}$, and this is either identically zero, or else has the form $\mu \mapsto \widehat{\mu}(\gamma)$ \cite[Theorem 4.2, p88]{fol}. Hence by (\ref{specchar}), if $\mu \in L^1(G)$, then $\sigma(\mu) \subset \widehat{\mu}(\widehat{G})\cup\{0\}$. In this case the Riemann-Lebesgue lemma tells us $0$ is a limit point of $\widehat{\mu}(\widehat{G})$, and we conclude that $\mu$ has natural spectrum.

When $G$ is discrete $L^1(G)=M(G)$, and so all measures have natural spectrum but when $G$ is not discrete there are measures which do not and this is called the \textbf{Wiener-Pitt phenomenon}. The first examples of such measures were identified in \cite{wp}, with the first proof in \cite{schreider}, and alternate approaches in \cite{wil} and \cite{graham}.

Since a measure $\mu \in M(G)$ is uniquely determined by its Fourier-Stieltjes transform it is a long-standing problem to explain how various properties of a measure can be deduced from the behaviour of its Fourier-Stieltjes transform. In this paper we will analyse the case in which we only control the range of the Fourier-Stieltjes transform, not the detailed distribution of the values attained on $\widehat{G}$.

A compact set $K$ is one of the titular Wiener-Pitt sets if whenever $\mu \in M(G)$ has $\widehat{\mu}(\widehat{G}) \subset K$ then $\mu$ has natural spectrum. It is a short exercise\footnote{See \cite[p117]{ow}.} to show that all finite sets are Wiener-Pitt sets, and we are interested in finding infinite Wiener-Pitt sets.

The study of this class of problems was (probably) initiated by Sarnak in \cite{s} for multipliers on $L^{p}$-spaces where $p>1$. The main result from \cite{s} can be vaguely described as: `if the closure of the range of the multiplier is sufficiently small (including all countable sets) then that multiplier has a natural spectrum'. Our problem corresponds to multipliers on $L^1$-spaces, which is different and not covered by Sarnak. On the other hand:
\begin{thm}[{\cite[Theorem 2]{ow}}]\label{thm1}
There exists an uncountable compact set $K$ such that if $\mu\in M(\mathbb{T})$ and $\widehat{\mu}(\mathbb{Z})\subset K$ then $\mu$ has a natural spectrum.
\end{thm}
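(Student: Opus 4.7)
The plan is to construct $K = \{0\} \cup \bigcup_{n=1}^{\infty} K_n$, where the $K_n$ are pairwise disjoint uncountable compact sets, each concentrated near a point $a_n$ with $|a_n| \to 0$ extremely rapidly (say $|a_{n+1}| \leq |a_n|^2$), and with $\operatorname{diam}(K_n) \ll |a_n|$. Uncountability of $K$ comes from making any single $K_n$ uncountable, e.g.\ an arc of a small circle centered at $a_n$; compactness follows from the decay of $|a_n|$ and $\operatorname{diam}(K_n)$. The well-separation of the annuli $\{|z| \approx |a_n|\}$ then ensures that for any $\mu$ with $\widehat{\mu}(\mathbb{Z}) \subset K$, the level sets $E_n := \widehat{\mu}^{-1}(K_n) \subset \mathbb{Z}$ partition the support of $\widehat{\mu}$ cleanly.

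The goal is to show that for every $\chi \in \Delta(M(\mathbb{T}))$, $\chi(\mu) \in \overline{\widehat{\mu}(\mathbb{Z})} \subset K$. My strategy is a level-by-level decomposition $\mu = \sum_n \mu_n$ with $\widehat{\mu_n}$ supported on $E_n$. If the $\mu_n$ can be produced effectively in $M(\mathbb{T})$ with sufficiently small norms, then the tails $\sum_{n > N} \mu_n$ have Fourier transforms of sup-norm at most $\max_{k > N}(|a_k| + \operatorname{diam}(K_k))$, which tends to zero. A separate Gelfand-theoretic argument, applied to the finite sum $\sum_{n \le N} \mu_n$ whose transform takes values in the compact set $\bigcup_{n \le N} K_n$, would show that $\chi$ of this finite sum lies in that set. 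Passing to the limit, $\chi(\mu) \in K$, which is the desired natural spectrum.

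The main obstacle is producing the $\mu_n$. By Cohen's idempotent theorem, the exact projection $\mu \mapsto$ (measure with Fourier transform $\widehat{\mu}\mathbf{1}_{E_n}$) is bounded on $M(\mathbb{T})$ only when $E_n$ lies in the coset ring of $\mathbb{Z}$, which is not automatically forced by our hypothesis. The remedy is to use approximate idempotents --- for instance, trigonometric polynomials mimicking $\mathbf{1}_{E_n}$ at a scale appropriate to $|a_n|$ --- that introduce controlled errors migrating into lower levels. Provided $|a_n|$ decays faster than the norms of these approximate projectors can grow, the errors cascade harmlessly and the series converges. Calibrating this quantitative balance between the decay of $|a_n|$ and the inevitable norm inflation of approximate projectors onto general subsets of $\mathbb{Z}$ is the heart of the argument and pins down the specific construction of the sequence $|a_n|$.
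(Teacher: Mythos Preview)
The paper does not prove this theorem --- it is quoted from \cite{ow}, with the remark that the proof there rests on the Littlewood conjecture (Konyagin; McGehee--Pigno--Smith). Your outline has the right architecture (level-set decomposition, approximate projectors, calibrated decay of $(a_n)$), matching both \cite{ow} and the present paper's Theorem~\ref{najp}, but the key quantitative input is missing. You propose to fix $(a_n)$ in advance and then, for an arbitrary $\mu$, build approximate projectors onto $E_n=\widehat{\mu}^{-1}(K_n)$; however, the norm of any such projector (via BPB or otherwise) depends on $|E_n|$, which depends on the unknown $\mu$, so the ``calibration'' you describe is circular without an \emph{a priori} bound on $|E_n|$. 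That bound is precisely what the Littlewood inequality supplies (and what Theorem~\ref{pom} supplies in this paper): if $|\widehat{\mu}|\geq a_n$ on $\bigcup_{k\le n}E_k$ and $|\widehat{\mu}|\leq a_{n+1}$ elsewhere, then $\bigl|\bigcup_{k\le n}E_k\bigr|$ is bounded in terms of $\|\mu\|/a_n$ and $a_{n+1}/a_n$ alone.

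Your endgame also has gaps. The ``separate Gelfand-theoretic argument'' that $\chi\bigl(\sum_{n\le N}\mu_n\bigr)\in\bigcup_{n\le N}K_n$ is unjustified: the easy fact is that \emph{finite} sets are Wiener--Pitt, but your $K_n$ are uncountable, and there is no general principle placing $\chi(\nu)$ inside the range of $\widehat{\nu}$ --- that is exactly the phenomenon at issue. You control only $\bigl\|\widehat{\sum_{n>N}\mu_n}\bigr\|_\infty$, whereas $|\chi(\nu)|\le\|\nu\|$, not $\le\|\widehat{\nu}\|_\infty$. And even granting both steps you would obtain only $\chi(\mu)\in K$, which is strictly weaker than $\chi(\mu)\in\overline{\widehat{\mu}(\mathbb{Z})}$. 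The route actually taken in \cite{ow}, and mirrored in Theorem~\ref{najp} here, sidesteps all of this: once each $E_n$ is known to be finite, the partial sums are trigonometric polynomials, one obtains $\|\mu_n\|=O(a_n^{-1})$, and hence $\sum_n a_n^2\mu_n$ converges in $L^1$ to $\mu\ast\mu$; natural spectrum then follows from Zafran's theorem \cite{Zafran}.
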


The proof of this result is restricted to the situation in which the Littlewood conjecture (resolved in \cite{ko} and \cite{mps}) holds true which essentially means that there is no hope in extending these arguments beyond the case of groups having (essentially) torsion-free dual. 

The purpose of this paper is to try to extend Theorem \ref{thm1} to arbitrary compact $G$. Whilst we do establish a result in that generality of group, we have to make do with a countably infinite set and we also need an additional restriction on the measures we consider.  This restriction is that $\mu\in M(G)$ be \textbf{strongly continuous}, which means that $|\mu|(x+H)=0$ for all $x \in G$ and $H \leq G$ closed and of infinite index.
\begin{thm}\label{glow}
There is a function $\delta:(0,1] \rightarrow (0,1]$ such that if $(a_n)_{n=1}^\infty$ is a sequence of positive reals with $a_1 \leq 1$ and $a_{n+1} \leq a_n\delta(a_n)$ for all $n \geq 1$, $G$ is compact, and $\mu\in M(G)$ is strongly continuous with $\widehat{\mu}(\widehat{G})\subset\{a_{n}:n\in\mathbb{N}\}\cup\{0\}$ and $\|\mu\|\leq 1$, then $\mu$ has natural spectrum.
\end{thm}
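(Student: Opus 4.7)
The plan is to reduce the problem to showing $\mu * \mu \in L^1(G)$, and then further to an $\ell^4$-summability statement for $\widehat{\mu}$. For the first reduction, once $\mu * \mu \in L^1(G)$, the fact that $L^1(G)$ is an ideal in $M(G)$ also gives $\mu^3 = \mu \cdot (\mu * \mu) \in L^1(G)$, so both $\mu^2$ and $\mu^3$ have natural spectrum by the Riemann--Lebesgue-plus-characters-of-$L^1$ argument recalled in the introduction. Thus $\sigma(\mu^2) \subset \{a_n^2 : n \ge 1\}\cup\{0\}$ and $\sigma(\mu^3) \subset \{a_n^3 : n \ge 1\}\cup\{0\}$, and the spectral mapping theorem applied to $z\mapsto z^2$ and $z\mapsto z^3$ then shows that $\sigma(\mu)$ lies in both $\{\pm a_n\}\cup\{0\}$ and $\{a_n e^{2\pi i k/3} : n \ge 1,\, k \in \{0,1,2\}\}\cup\{0\}$; the intersection of these two sets in $\mathbb{C}$ is exactly $\{a_n\}\cup\{0\} = \overline{\widehat{\mu}(\widehat{G})}$, giving natural spectrum.

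For the second reduction, since $G$ is compact we have $L^2(G) \subset L^1(G)$ and Plancherel provides an isometry $L^2(G) \cong \ell^2(\widehat{G})$; hence $\mu * \mu \in L^1(G)$ will follow from $\widehat{\mu * \mu} = \widehat{\mu}^2 \in \ell^2(\widehat{G})$, i.e.\ from $\widehat{\mu} \in \ell^4(\widehat{G})$. Writing $E_n := \widehat{\mu}^{-1}(\{a_n\})$, this amounts to
$$\sum_{n \ge 1} a_n^4 \, |E_n| < \infty.$$
The task therefore reduces to bounding $|E_n|$ in a way compatible with the rapid decay $a_{n+1} \le a_n \delta(a_n)$.

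To bound $|E_n|$ I would peel off the spectral levels inductively using holomorphic functional calculus. The rapid decay separates $a_n$ from both $0$ and the tail $\{a_k : k > n\}$, so --- provided the induction has kept $\sigma(\mu)$ under control through level $n-1$ --- one obtains a spectral idempotent $e_n \in M(G)$ with $\widehat{e_n} = \mathbf{1}_{E_n}$ and norm controlled in terms of $\delta$. The convolution $\nu_n := e_n * \mu$ is then strongly continuous with $\widehat{\nu_n} = a_n \mathbf{1}_{E_n}$, a constant plateau at height $a_n$. A Meyer-type concentration principle --- exploiting that strong continuity forces Wiener-type dispersion of $\widehat{\nu_n}$ along every infinite subgroup of $\widehat{G}$, which is incompatible with $\widehat{\nu_n}$ remaining constantly $a_n$ on too large a set --- should then bound $|E_n|$ in terms of $a_n$ and $\|e_n\|$.

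The main obstacle is precisely this concentration lemma: making it quantitatively sharp enough to yield a bound on $|E_n|$ summable against $a_n^4$, while simultaneously bootstrapping the norms $\|e_n\|$ through the induction. The function $\delta$ must be engineered so that the Riesz projection contours around each $a_n$ stay bounded and so that the resulting concentration estimates interlock with those norm bounds to give the desired convergence $\sum_n a_n^4|E_n| < \infty$; matching these constraints is what forces the particular form of the decay condition appearing in the statement.
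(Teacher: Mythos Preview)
Your first reduction is fine, though the paper does it in one line: once $\mu*\mu\in L^1(G)$, Zafran's result gives $\chi(\mu)^2=\chi(\mu*\mu)=0$ for every $\chi\in\Delta(M(G))$ not arising from a character, whence $\chi(\mu)=0$ and natural spectrum follows without invoking $\mu^3$ or the spectral mapping theorem.

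The second reduction is a genuine gap. Showing $\widehat{\mu}\in\ell^4(\widehat{G})$ is strictly stronger than $\mu*\mu\in L^1(G)$, and in groups with torsion it can fail for measures satisfying all the hypotheses, no matter how $\delta$ is chosen. Take $G=\prod_{k\ge1}\mathbb{Z}_2$, partition $\mathbb{N}$ into finite disjoint blocks $B_n$, let $K_n=\{x:x_i=0\text{ for }i\in B_n\}$, and set $\mu=\sum_n a_n(m_{K_n}-\mu_G)$, where $m_{K_n}$ is Haar probability measure on the finite-index subgroup $K_n$. Then $\mu$ is strongly continuous with $\|\mu\|\le2\sum_n a_n$, and $\widehat{\mu}$ equals $a_n$ exactly on $E_n=K_n^\perp\setminus\{0\}$, so $|E_n|=2^{|B_n|}-1$. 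Since $|B_n|$ is a free parameter, choosing $a_n^4\,2^{|B_n|}\ge1$ forces $\sum_n a_n^4|E_n|=\infty$; yet $\mu*\mu=\sum_n a_n^2(m_{K_n}-\mu_G)$ converges in $L^1(G)$. Thus $|E_n|$ admits no bound in terms of $a_n$ alone, which simultaneously rules out your proposed ``Meyer-type concentration'' lemma --- this is exactly why Theorem~\ref{pom} carries an explicit cardinality estimate only under the additional torsion-free hypothesis. Your functional-calculus step has a separate circularity: drawing a Riesz contour around $a_n$ that avoids $\sigma(\mu)$ already presupposes the spectral control you are trying to establish.

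The paper's way around this is to give up on $|E_n|$ entirely and instead bound the $L^1$-norm $\|\mu_n\|$ of the trigonometric polynomial with $\widehat{\mu_n}=1_{E_n}$; in the example above $\|\mu_n\|\le2$ regardless of $|B_n|$. The tool for this is Theorem~\ref{skn}, a quantitative idempotent theorem: if $\nu$ has real transform with $\|\nu\|\le M$ and $d(\widehat{\nu},\mathbb{Z})$ is small enough, then the measure with transform $(\widehat{\nu})_\mathbb{Z}$ has norm at most $(1+\varepsilon_1)M+\varepsilon_2$. Applied inductively to the tails $a_n^{-1}\tau_n$ where $\tau_n=\mu-\sum_{k<n}a_k\mu_k$, this yields $\|\mu_n\|=O(a_n^{-3/2})$, so $\sum_n a_n^2\|\mu_n\|<\infty$ and the partial sums $\sum_{k\le n}a_k^2\mu_k$ converge \emph{in $L^1$}, not $L^2$, to $\mu*\mu$.
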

A direct attempt to transport the arguments of \cite{ow} runs into difficulties because general compact Hausdorff Abelian groups may have infinite closed subgroups of infinite index, and Haar measure on such a subgroup, extended to the whole group by letting it be $0$ elsewhere, is a continuous measure that is not strongly continuous.

The proof of Theorem \ref{glow} under the additional assumption that $\widehat{G}$ is torsion-free is easier and since the more general argument is based on it, we begin in \S\ref{sec.torsion} with a proof of this before moving to the full proof of Theorem \ref{glow} in \S\ref{sec.glow}. 

In fact we show something slightly stronger than Theorem \ref{glow}: Theorem \ref{glowrefined} shows that $\mu$ satisfying the hypotheses of Theorem \ref{glow} has $\mu \ast \mu \in L^1(G)$.  It follows by the Riemann-Lebesgue lemma \cite[Proposition 4.13, p93]{fol} that we can apply \cite[Theorem 3.2(a)]{Zafran} to get that $\chi(\mu)^2=\chi(\mu\ast \mu)=0$ for all those $\chi \in \Delta(M(G))$ that do not arise from (\ref{embed}). It then follows from (\ref{specchar}) that $\mu$ has a natural spectrum.\footnote{This is genuinely weaker: the probability measure $\mu$ assigning mass $1$ to the identity has a natural spectrum but $\mu \ast \mu = \mu\not \in L^1(G)$ for $G$ infinite.} 

We close the introduction by remarking that it may be possible to prove a non-Abelian analogue of Theorem \ref{glow}, where the discrete Abelian group $\widehat{G}$ is replaced by a more general discrete group $\Gamma$, and $\widehat{\mu}$ is replaced by an element of $B(\Gamma)\cap c_{0}(\Gamma)$ (the Rajchman algebra).

\section{The torsion-free case}\label{sec.torsion}

In this subsection we will present a proof of Theorem \ref{glow} for the case when $\widehat{G}$ is additionally assumed to be torsion-free. This proof will also be the backbone of the argument in the more general case and highlights where we need additional ingredients.

We start by recording a general tool we need in both cases: the so-called uniform invariant approximation property for compact Abelian groups. This was introduced by Bo{\. z}ejko and Pe{\l}czy{\' n}ski in \cite{bp} with an improvement due to Bourgain (see the original work \cite{b} and for a more general result see \cite{o}).
\begin{thm}[BPB construction]\label{BPB}
There is an absolute constant $C>1$ such that for any $\varepsilon\in (0,1]$, if $G$ is compact, and $\Lambda\subset\widehat{G}$ is finite, then there is a trigonometric polynomial $f$ with the properties:

\begin{enumerate*}[label=(\roman*)]
  \item $\widehat{f}(\lambda)=1$ for all $\lambda\in\Lambda$;
  \item $\|f\|_{1}\leq 1+\varepsilon$; and
  \item\label{BPB3} $|\supp \widehat{f}| \leq\left(\frac{C}{\varepsilon}\right)^{2|\Lambda|}$.
\end{enumerate*}
\end{thm}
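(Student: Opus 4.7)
The result is Bourgain's refinement of Bo{\.z}ejko--Pe{\l}czy{\'n}ski, and I would not try to prove it from scratch but rather sketch how the three requirements (i)--(iii) can be obtained simultaneously. My strategy is to first establish (i) and (ii) without the sparse-support condition, then sparsify and correct.

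\emph{Stage 1 (soft existence).} The Dirac mass $\delta_{0}\in M(G)$ satisfies $\widehat{\delta_{0}}\equiv 1$ and $\|\delta_{0}\|=1$. A Hahn--Banach argument applied to the restriction map $f\mapsto(\widehat{f}(\lambda))_{\lambda\in\Lambda}$ from $L^{1}(G)$ into $\mathbb{C}^{\Lambda}$, together with density of trigonometric polynomials in the weak-$*$ topology, yields a polynomial $P_{0}$ with $\widehat{P_{0}}|_{\Lambda}\equiv 1$ and $\|P_{0}\|_{1}\leq 1+\varepsilon/3$. The spectrum $\supp\widehat{P_{0}}$ may however be enormous.

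\emph{Stage 2 (sparsification by random selection).} Writing $P_{0}=\sum_{\gamma}c_{\gamma}\gamma$, treat $(|c_{\gamma}|/\|c\|_{\ell^{1}})_{\gamma}$ as a probability measure on $\widehat{G}$ and draw $N$ independent samples $\gamma_{1},\ldots,\gamma_{N}$ from it. The empirical polynomial $\widetilde{P}=(\|c\|_{\ell^{1}}/N)\sum_{j=1}^{N}\mathrm{sgn}(c_{\gamma_{j}})\gamma_{j}$ satisfies $\mathbb{E}[\widehat{\widetilde{P}}(\lambda)]=\widehat{P_{0}}(\lambda)=1$ for every $\lambda\in\Lambda$. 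For $N$ of order $(C/\varepsilon)^{2|\Lambda|}$, a chaining or entropy argument --- simultaneously covering the configurations $(\widehat{\widetilde{P}}(\lambda))_{\lambda\in\Lambda}\in\mathbb{C}^{|\Lambda|}$ and the $L^{1}$ norm of $\widetilde{P}$ --- gives, with positive probability, both $|\widehat{\widetilde{P}}(\lambda)-1|\leq\varepsilon/(3|\Lambda|)$ on $\Lambda$ and $\|\widetilde{P}\|_{1}\leq 1+2\varepsilon/3$. The resulting $\widetilde{P}$ has spectrum of size at most $N$.

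\emph{Stage 3 (correction).} Set $f=\widetilde{P}+\sum_{\lambda\in\Lambda}(1-\widehat{\widetilde{P}}(\lambda))\lambda$. This enforces (i) exactly, enlarges the spectrum by at most $|\Lambda|$ (negligible compared with $N$), and increases the $L^{1}$ norm by at most $\varepsilon/3$, which closes (ii). The main obstacle is the sharp exponent $2|\Lambda|$ in (iii): a naive Bernstein-plus-union-bound across $\Lambda$ yields something exponential in $|\Lambda|^{2}/\varepsilon^{2}$, not $(C/\varepsilon)^{2|\Lambda|}$. Reaching the sharp exponent requires Bourgain's entropy/chaining refinement, and simultaneously controlling the $L^{1}$ norm of a random trigonometric polynomial (e.g.\ via a Pisier--Marcus-type inequality) is the most delicate technical ingredient.
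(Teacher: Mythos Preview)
The paper does not actually prove Theorem~\ref{BPB}: it is stated as a known tool and attributed to Bo{\.z}ejko--Pe{\l}czy{\'n}ski with Bourgain's quantitative refinement (references \cite{bp}, \cite{b}, \cite{o}), with no proof or sketch given in the text. So there is no ``paper's own proof'' to compare against.

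That said, your sketch does trace the outline of Bourgain's argument reasonably well. The three-stage structure (soft approximation of $\delta_0$, random sparsification, deterministic correction) is the right shape, and you are honest that the crux---getting the exponent $2|\Lambda|$ rather than something like $|\Lambda|^2/\varepsilon^2$---lies in Bourgain's entropy estimate, which you do not reproduce. As a proof this is therefore incomplete: Stage~2 as written is a plan, not an argument, and the simultaneous control of the $L^1$ norm and the $|\Lambda|$ Fourier coefficients with the stated sample complexity is precisely the nontrivial content of \cite{b}. If you want a self-contained proof you would need to supply that chaining argument; if you are content to cite the literature (as the paper does), then simply invoking \cite{b} or \cite{o} is enough and the sketch is unnecessary.

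One small wrinkle: in Stage~1 your appeal to Hahn--Banach plus weak-$*$ density is a bit loose---the cleanest way to produce $P_0$ is to convolve $\delta_0$ with a de~la~Vall\'ee~Poussin--type kernel adapted to $\Lambda$, which directly gives a trigonometric polynomial with $\widehat{P_0}\equiv 1$ on $\Lambda$ and $\|P_0\|_1$ close to $1$. This is minor and does not affect the overall logic.
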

The notation $\widehat{f}$ here denotes the Fourier-Stieltjes transform of $fd\mu_G$ where $\mu_G$ is Haar probability measure on $G$, and $\|\cdot \|_1$ denotes the $L^1$-norm of $f$ against $\mu_G$.

We shall also need tools to understand strongly continuous measures, and for this purpose we have:
\begin{thm}[{\cite[Theorem 1.4.1, p18]{grmc}}]\label{pom}
There is an absolute constant $C' >1$ such that if $G$ is compact, $r > 2$ is an integer, $\mu\in M(G)$ is strongly continuous with $\|\mu\|<\frac{\sqrt{r}}{4}$, $Q:=\{\gamma\in\widehat{G}:|\widehat{\mu}(\gamma)|\geq 1\}$, and $|\widehat{\mu}(\gamma)|\leq \exp(-r)$ for $\gamma\notin Q$; then $Q$ is finite, and if $\widehat{G}$ is torsion-free then $|Q| \leq \exp(C'r^2 \log r)$. 
\end{thm}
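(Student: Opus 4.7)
The strategy is to exploit the two-level structure of $|\widehat\mu|$ (at least $1$ on $Q$, at most $e^{-r}$ off $Q$), testing $\mu$ against an explicit trigonometric polynomial constructed either from Theorem \ref{BPB} or, in the torsion-free case, from a Riesz product on a dissociate subset of $Q$. Strong continuity supplies the quantitative input that prevents mass from accumulating on too large a spectral peak set.

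For the finiteness of $Q$, I would argue that if $Q$ were infinite then a Wiener-style pm-set argument for strongly continuous measures (as in \cite[Ch.~1]{grmc}) forces $\|\mu\|$ to be unbounded. Concretely: extract a sufficiently lacunary subfamily $\{\gamma_n\}\subset Q$, attach parameters $\sigma_n\in\mathbb{T}$, and invoke the second-moment computation
\[
\mathbb{E}_\sigma\Bigl|\int \prod_{n\leq N}(1+\mathrm{Re}(\bar\sigma_n\gamma_n))\,d\mu\Bigr|^2\;\geq\;\sum_{n\leq N}\tfrac{1}{4}|\widehat\mu(\gamma_n)|^2\;\geq\;N/4,
\]
which via the strong-continuity annihilation of such Riesz products will contradict the boundedness of $\|\mu\|$ as $N\to\infty$.

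For the torsion-free quantitative bound, suppose $|Q|=N$ and extract a dissociate $D\subset Q$ with $|D|=k\geq\lfloor\log_3 N\rfloor$ by a greedy argument valid in any torsion-free Abelian group (since the $3^k$ signed sums over $D$ are distinct). The Riesz-product second-moment argument applied to $D$ yields a signed product $q_D^\sigma:=\prod_{\gamma\in D}(1+\mathrm{Re}(\bar\sigma_\gamma\gamma))$ with $|\int q_D^\sigma\,d\mu|\geq \sqrt k/2$ for a suitable choice of $\sigma$. For the matching upper bound, I would decompose the expansion of $\int q_D^\sigma\,d\mu$ over the signed sums $\sum_j\epsilon_j\gamma_j$, $\epsilon\in\{-1,0,1\}^k$: those lying off $Q$ contribute at most $3^ke^{-r}$ in total, while those lying in $Q$ are handled by inserting a BPB polynomial $f$ from Theorem \ref{BPB} with $\widehat f=1$ on $Q$, at a cost $\|f\|_2^2\leq(C/\varepsilon)^{2|Q|}$. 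Balancing these error budgets against the hypothesis $\|\mu\|^2<r/16$ and optimising $\varepsilon$ should force $k\lesssim r\sqrt{\log r}$, hence $|Q|\leq\exp(C'r^2\log r)$.

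The main obstacle I anticipate is converting the qualitative strong-continuity input into the quantitative upper bound needed to match the Riesz-product lower bound of order $\sqrt k$. The trivial bound $|\int q_D^\sigma\,d\mu|\leq 2^k\|\mu\|$ is useless; one must invoke Theorem \ref{BPB} to trim $\widehat\mu$ outside $Q$ before pairing with the Riesz product, and then carefully balance the resulting $(C/\varepsilon)^{|Q|}$ penalty against the $e^{-r}$ gain from the decay off $Q$. This balancing, together with the pigeonhole $k\geq\log_3|Q|$ from torsion-freeness, is what forces the specific $r^2\log r$ in the exponent and forms the technical core of the Graham--McGehee proof in \cite[Theorem 1.4.1]{grmc}.
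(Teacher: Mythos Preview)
The paper does not actually prove Theorem \ref{pom}; it is quoted verbatim from \cite[Theorem 1.4.1]{grmc} and used as a black box in the proofs of Theorems \ref{najp} and \ref{glowrefined}. So there is no ``paper's own proof'' to compare against, and your proposal is really a sketch of how the Graham--McGehee argument might go.

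That said, your outline has the right architecture --- Riesz products on a dissociate subset of $Q$, combined with the greedy $\log_3|Q|$ lower bound for the size of such a subset in a torsion-free group --- and this is indeed the skeleton of the argument in \cite{grmc}. But there is a genuine gap in your handling of the upper bound for $\bigl|\int q_D^\sigma\,d\mu\bigr|$. You propose to ``insert a BPB polynomial $f$ with $\widehat f=1$ on $Q$, at a cost $\|f\|_2^2\leq (C/\varepsilon)^{2|Q|}$'': this is circular, since $|Q|$ is precisely the quantity you are trying to bound, and feeding an exponential-in-$|Q|$ penalty back into the balancing step cannot close the loop. The BPB construction plays no role in the Graham--McGehee proof of this theorem; Theorem \ref{BPB} enters the present paper only later, in the proof of Theorem \ref{najp}, \emph{after} Theorem \ref{pom} has already been invoked to guarantee that the relevant spectral sets are finite (and to bound their size in the torsion-free case).

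What you are missing is that the upper bound comes essentially for free from the norm hypothesis: the Riesz product $q_D^\sigma$ is non-negative with $\int q_D^\sigma\,d\mu_G=1$, so $q_D^\sigma\,d\mu_G$ is a probability measure, and one works with the pairing at the level of Fourier coefficients directly, exploiting $\|\mu\|<\sqrt r/4$ together with the $e^{-r}$ decay off $Q$ and the dissociateness of $D$, without ever needing to control how many of the $3^k$ signed sums fall back into $Q$. Once the circular BPB step is removed and this direct estimate is used, the comparison with the second-moment lower bound $\sqrt k/2$ gives the stated exponent.
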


Our main theorem in the case when $G$ has a torsion-free dual is the following:

\begin{thm}\label{najp}
There is an absolute constant $C''>1$ such that if $(a_{n})_{n=1}^{\infty}$ is a sequence of positive reals with $a_1 \leq 1$ and $a_{n+1} \leq a_n\exp(-\exp(C''a_n^{-6}))$ for all $n\geq 1$, then for $G$ compact with torsion-free dual, and $\mu\in M(G)$ strongly continuous with $\widehat{\mu}(\widehat{G})\subset\{a_{n}:n\in\mathbb{N}\}\cup\{0\}$ and $\|\mu\|\leq 1$, we have $\mu\ast\mu\in L^{1}(G)$.
\end{thm}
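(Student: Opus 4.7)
The plan is to exhibit $\mu\ast\mu$ as an $M$-norm limit of trigonometric polynomials, which forces $\mu\ast\mu\in L^{1}(G)$ by closedness of $L^{1}(G)$ in $M(G)$. The two main ingredients are Theorem \ref{pom}, which bounds the level sets of $\widehat{\mu}$, and the BPB construction of Theorem \ref{BPB}.

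For each $n\geq 1$ set $Q_{n}:=\{\gamma\in\widehat{G}:|\widehat{\mu}(\gamma)|\geq a_{n}\}$; because $\widehat{\mu}$ takes values only in $\{a_{k}\}\cup\{0\}$, this equals $\{\gamma:\widehat{\mu}(\gamma)\in\{a_{1},\ldots,a_{n}\}\}$. Applying Theorem \ref{pom} to the strongly continuous measure $\mu/a_{n}$ (of norm at most $1/a_{n}$) with $r$ chosen just above $16/a_{n}^{2}$, which is consistent with $e^{-r}\geq a_{n+1}/a_{n}$ by the doubly exponential decay in the hypothesis, gives $|Q_{n}|\leq \exp(C_{1}a_{n}^{-4}\log(1/a_{n}))$. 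Then apply Theorem \ref{BPB} to $\Lambda=Q_{n}$ with $\varepsilon=1/2$ to produce a trigonometric polynomial $f_{n}$ satisfying $\widehat{f_{n}}\equiv 1$ on $Q_{n}$, $\|f_{n}\|_{1}\leq 3/2$, and $|\supp\widehat{f_{n}}|\leq (2C)^{2|Q_{n}|}$.

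The key algebraic identity comes from the following construction. Let $\eta_{n}$ denote the trigonometric polynomial with $\widehat{\eta_{n}}=\widehat{\mu}\cdot\mathbf{1}_{Q_{n}}$ (well-defined since $Q_{n}$ is finite), and set $\widetilde{\tau}_{n}:=\mu-\eta_{n}$, so that $\widehat{\widetilde{\tau}_{n}}=\widehat{\mu}\cdot\mathbf{1}_{\widehat{G}\setminus Q_{n}}$. Since $\widehat{\eta_{n}}$ and $\widehat{\widetilde{\tau}_{n}}$ have disjoint supports, the Fourier transform of $\eta_{n}\ast\widetilde{\tau}_{n}$ is identically zero, so this measure vanishes and
\[
\mu\ast\mu\;=\;\eta_{n}\ast\eta_{n}\;+\;\widetilde{\tau}_{n}\ast\widetilde{\tau}_{n}.
\]
The first summand is a trigonometric polynomial, hence lies in $L^{1}(G)$. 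Writing $\eta_{n}=f_{n}\ast\mu-\xi_{n}$, where $\xi_{n}$ is the trigonometric polynomial with Fourier transform $\widehat{f_{n}}\widehat{\mu}\cdot\mathbf{1}_{\widehat{G}\setminus Q_{n}}$, supported in $\supp\widehat{f_{n}}\setminus Q_{n}$ with $\|\widehat{\xi_{n}}\|_{\infty}\leq(3/2)a_{n+1}$, one finds $\|\xi_{n}\|_{M}\leq\|\xi_{n}\|_{\infty}\leq (3/2)(2C)^{2|Q_{n}|}a_{n+1}$, and the hypothesis $a_{n+1}\leq a_{n}\exp(-\exp(C''a_{n}^{-6}))$ is calibrated precisely so this tends rapidly to zero. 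In particular $\widetilde{\tau}_{n}$ is strongly continuous with uniformly bounded $M$-norm and with $\widehat{\widetilde{\tau}_{n}}$ still taking values in $\{a_{k}\}\cup\{0\}$.

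The heart of the argument, and the main obstacle, is establishing $\widetilde{\tau}_{n}\ast\widetilde{\tau}_{n}\in L^{1}(G)$. The trivial bound $\|\widetilde{\tau}_{n}\ast\widetilde{\tau}_{n}\|_{M}\leq\|\widetilde{\tau}_{n}\|_{M}^{2}$ gives only a uniform constant, and attempting to deduce an $M$-norm bound directly from the pointwise bound $\|\widehat{\widetilde{\tau}_{n}\ast\widetilde{\tau}_{n}}\|_{\infty}\leq a_{n+1}^{2}$ would amount to a Wiener-Pitt-type assertion for the residual. I would iterate the decomposition: since $\widetilde{\tau}_{n}$ satisfies the structural hypothesis with the sequence shifted to start at $a_{n+1}$, the same construction applies recursively, writing $\widetilde{\tau}_{n}\ast\widetilde{\tau}_{n}=\eta_{n}^{(2)}\ast\eta_{n}^{(2)}+\widetilde{\tau}_{n}^{(2)}\ast\widetilde{\tau}_{n}^{(2)}$ and so on, and yielding $\mu\ast\mu$ as a countable sum of trigonometric polynomial convolution squares plus a tail. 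The delicate quantitative step is showing this series converges in $M$-norm: the doubly exponential $|\supp\widehat{f_{n}}|$ coming from Theorem \ref{BPB} applied to the doubly-exponential-sized $|Q_{n}|$ from Theorem \ref{pom} must be absorbed by the super-rapid decay of $a_{n+1}$, and the exponent $-6$ in the hypothesis $\exp(C''a_{n}^{-6})$ is calibrated to provide exactly the required slack at every stage of the iteration.
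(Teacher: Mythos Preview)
Your setup is essentially the paper's: bound the level sets $Q_n$ via Theorem~\ref{pom}, build BPB polynomials $f_n$ on $Q_n$, and write $\eta_n=f_n\ast\mu-\xi_n$ with $\|\xi_n\|$ controlled by $|\supp\widehat{f_n}|\cdot a_{n+1}$. At this point you have already extracted the only place where the doubly-exponential decay hypothesis is used: it forces $\|\xi_n\|\leq 1$, hence $\|\eta_n\|\leq \|f_n\|_1\|\mu\|+\|\xi_n\|\leq 5/2$ uniformly in $n$. This is exactly the paper's bound $\|\nu_n\|\leq 3$ (your $\eta_n$ is the paper's $\nu_n$).

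The gap is in what you do next. You recast the problem as ``establishing $\widetilde{\tau}_n\ast\widetilde{\tau}_n\in L^1(G)$'' and propose to iterate; but for fixed $n$ that statement is equivalent to $\mu\ast\mu\in L^1(G)$, so the iteration is circular, and your concluding claim that the super-rapid decay must be ``absorbed at every stage'' to obtain convergence is a misdiagnosis. The convergence step is actually elementary once $\|\eta_n\|=O(1)$ is in hand, and it needs nothing more than $\sum a_k<\infty$.

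Here is the missing link. Write $\eta_n-\eta_{n-1}=a_n\mu_n$ where $\widehat{\mu_n}=1_{A_n}$ with $A_n:=Q_n\setminus Q_{n-1}$. From $\|\eta_n\|,\|\eta_{n-1}\|\leq C$ you get $\|\mu_n\|\leq 2C/a_n$. Since $1_{A_n}$ is idempotent, $\mu_n\ast\mu_n=\mu_n$, so the terms in your ``countable sum of convolution squares'' are exactly $(\eta_n-\eta_{n-1})^{\ast 2}=a_n^2\mu_n$, and
\[
\bigl\|(\eta_n-\eta_{n-1})^{\ast 2}\bigr\| \;=\; a_n^2\|\mu_n\| \;\leq\; 2Ca_n,
\]
which is summable because $a_n\leq 2^{-(n-1)}$. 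Hence $\rho_N:=\eta_N\ast\eta_N=\sum_{k\leq N}a_k^2\mu_k$ is Cauchy in $M(G)$; each $\rho_N$ is a trigonometric polynomial, so the limit $\rho$ lies in $L^1(G)$; and $\widehat{\rho_N}\to\widehat{\mu}^{\,2}$ pointwise gives $\rho=\mu\ast\mu$ by Fourier uniqueness. No further appeal to the decay rate of $a_{n+1}$, and no recursion on $\widetilde{\tau}_n$, is needed.
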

\begin{proof}
Let $C'':=4C+324C'$ where $C$ and $C'$ are (here and throughout this argument) as in Theorems \ref{BPB} and \ref{pom} respectively, and suppose that $(a_n)_n$ and $\mu$ satisfy the hypotheses of the theorem, so in particular $a_n \leq 2^{-(n-1)}$ for all $n \geq 1$.

By Theorem \ref{pom} applied to $a_n^{-1}\mu$ with $r=\lceil 17a_n^{-2}\rceil$ we have
\begin{equation*}
\left|\bigcup_{k=1}^n{A_k}\right| \leq  \exp(C'\lceil 17a_n^{-2}\rceil^{3}) \text{ where } A_k:=\{\gamma \in \widehat{G}: |\widehat{\mu}(\gamma)| = a_k\},
\end{equation*}
since $a_{n+1} \leq a_n\exp(-\lceil 17a_n^{-2}\rceil)$. For each $n \geq 1$ define $\mu_n \in M(G)$ by Fourier inversion such that $\widehat{\mu_n}=1_{A_n}$ and put $\nu_n:=\sum_{k=1}^{n}{a_k\mu_k}$ so that the support of $\widehat{\nu_n}$ is $\bigcup_{k=1}^n{A_k}$.

For each $n \geq 1$ apply the BPB construction (Theorem \ref{BPB} with $\varepsilon=1$) to the sets $\supp \widehat{\nu_n}$ to get trigonometric polynomials $f_n$ such that

\begin{enumerate*}[label=(\roman*)]
  \item $\widehat{f_n}(\gamma)=1$ for all $\gamma\in\supp\widehat{\nu_{n}}$;
  \item $\|f_n\|_{1}\leq 2$; and
  \item $|\supp \widehat{f_n}| \leq C^{2|\supp \widehat{\nu_n}|}$.
\end{enumerate*}

By Fourier inversion we have
\begin{align}\label{torsionuse}
\|\mu \ast f_n - \nu_n\| & \leq \left\|\frac{d\mu \ast f_n}{d\mu_G} - \frac{d\nu_n}{d\mu_G}\right\|_\infty\\ \nonumber & \leq\sum_{ \gamma\not \in \bigcup_{k=1}^n{A_k}}{|\widehat{\mu}(\gamma)\widehat{f_n}(\gamma)|}\\ \nonumber & \leq  \|f_n\|_1a_{n+1}|\supp \widehat{f_n}|\leq 2a_{n+1}C^{2\exp(C'\lceil 17a_n^{-2}\rceil^{3})} \leq 1.
\end{align}
Since $\|\mu\ast f_{n}\|\leq \|\mu\|\cdot \|f_{n}\|_1\leq 2$, it follows by the triangle inequality that $\|\nu_n\| \leq 3$ for $n \geq 1$ (and trivially for $n=0$ since $\nu_0=0$). By the triangle inequality again
\begin{equation*}
a_n\|\mu_n\| =\|\nu_n-\nu_{n-1}\| \leq \|\nu_n\|+\|\nu_{n-1}\| \leq 6 \text{ for }n \geq 1.
\end{equation*}
Finally, let $\rho_n:=\sum_{k=1}^n{a_k^2\mu_k}$.  Then
\begin{equation*}
\|\rho_n-\rho_m\| \leq \sum_{k=m+1}^n{a_k^2\|\mu_k\|} \leq 6\sum_{k=m+1}^n{a_k}\leq \frac{6}{2^m} \text{ for }n>m.
\end{equation*}
It follows that $(\rho_n)_n$ is Cauchy. Since their Fourier-Stieltjes transforms have finite support, the measures $\rho_n$ are also in $L^1(G)$, which is a closed ideal in $M(G)$ and $M(G)$ is complete, so $\rho_n \rightarrow \rho$ for some $\rho \in L^1(G)$.

Since the $A_k$s are pairwise disjoint we have $\widehat{\rho_n}(\gamma) \rightarrow \widehat{\mu}(\gamma)^2$ for $\gamma \in \widehat{G}$ and hence $\widehat{\rho}(\gamma)=\widehat{\mu}(\gamma)^2$ for all $\gamma \in \widehat{G}$, and by the uniqueness of the Fourier transform $\rho = \mu \ast \mu$ and the result is proved since $\rho \in L^1(G)$.
\end{proof}
To extend the proof above to groups with torsion in the dual we need a bound like (\ref{torsionuse}) without necessarily having a bound on the size of the $\supp \widehat{f_n}$ in terms of $a_n$. This is the main work of the next section.

\section{The general case}\label{sec.glow}

The main work of this section is to prove the following theorem which addresses the challenge raise at the end of the last section. After we have done that, a similar argument to the proof of Theorem \ref{najp} yields Theorem \ref{glowrefined} (and hence Theorem \ref{glow} per the discussion at the end of the introduction).

To state the theorem we need some notation: For $x\in G$ we write $n_{x}$ for an integer minimising $|f(x)-n_x|$ over all integers, and put
\begin{equation*}
d(f,\mathbb{Z}):=\sup\{|f(x)-n_{x}|:x\in G\}.
\end{equation*}
 If $d(f,\mathbb{Z})<\frac{1}{2}$, then every $n_{x}$ is uniquely defined and we are allowed to form a function $f_{\mathbb{Z}}:G\rightarrow\mathbb{Z}$ by putting $f_{\mathbb{Z}}(x):=n_{x}$ and in particular
 \begin{equation*}
 d(f,\mathbb{Z})=\|f-f_\mathbb{Z}\|_\infty.
 \end{equation*}

\begin{thm}\label{skn}
There is a function $\delta':(0,1]\times (0,1]\times [1,\infty) \rightarrow (0,1/2)$ such that if $M\geq 1$ and $\varepsilon_1,\varepsilon_2\in (0,1]$, $G$ is compact, $\mu \in M(G)$ has real Fourier-Stieltjes transform with $\|\mu\| \leq M$, $S:=\supp \left(\widehat{\mu}\right)_{\mathbb{Z}}$ is finite, and $d(\widehat{\mu},\mathbb{Z}) \leq \delta'(\varepsilon_1,\varepsilon_2,M)$, then the measure $\mu_\mathbb{Z}$ defined by $d\mu_\mathbb{Z}(x)=\sum_{\gamma \in S}{(\widehat{\mu})_\mathbb{Z}(\gamma)\gamma(x) d\mu_G(x)}$ has $\|\mu_{\mathbb{Z}}\|\leq (1+\varepsilon_{1})\|\mu\|+\varepsilon_{2}$.
\end{thm}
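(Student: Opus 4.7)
The plan is to approximate $\mu_{\mathbb{Z}}$ by convolving $\mu$ with a trigonometric polynomial $f$ from the BPB construction. Apply Theorem \ref{BPB} to the finite set $S$ with some parameter $\varepsilon$ (of size comparable to $\varepsilon_{1}$) to obtain $f$ with $\widehat{f}\equiv 1$ on $S$, $\|f\|_{1}\leq 1+\varepsilon$, and $|\supp\widehat{f}|\leq(C/\varepsilon)^{2|S|}$. Then $\|\mu\ast f\|\leq(1+\varepsilon)\|\mu\|$, and $\widehat{\mu\ast f}=\widehat{\mu}$ on $S$, while $|\widehat{\mu\ast f}(\gamma)|\leq d(1+\varepsilon)$ for $\gamma\notin S$ (where $d:=d(\widehat{\mu},\mathbb{Z})$). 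Provided $d(1+\varepsilon)<1/2$, integer rounding recovers $(\widehat{\mu})_{\mathbb{Z}}$ everywhere on $\widehat{G}$: on $S$ because $\widehat{\mu}$ is within $d<1/2$ of a nonzero integer, and off $S$ because the values lie in $(-1/2,1/2)$ and round to $0$. Hence $(\mu\ast f)_{\mathbb{Z}}=\mu_{\mathbb{Z}}$.

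Next, $\mu_{\mathbb{Z}}-\mu\ast f$ is a trigonometric polynomial whose Fourier transform has magnitude at most $d(1+\varepsilon)$ and is supported inside $\supp\widehat{f}$. Since $\mu_{G}$ is a probability and $\|g\|_{\infty}\leq\sum_{\gamma}|\widehat{g}(\gamma)|$ for trigonometric polynomials $g$, the triangle inequality gives
\begin{equation*}
\|\mu_{\mathbb{Z}}\|\leq\|\mu\ast f\|+\|\mu_{\mathbb{Z}}-\mu\ast f\|_{1}\leq(1+\varepsilon)\|\mu\|+d(1+\varepsilon)(C/\varepsilon)^{2|S|},
\end{equation*}
and the conclusion would follow if we could choose $d$ so that the second term is below $\varepsilon_{2}$.

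The main obstacle is that this bound depends on $|S|$, while $\delta'$ must depend only on $(\varepsilon_{1},\varepsilon_{2},M)$, and $|S|$ is not a priori bounded in those parameters. A single application of BPB therefore cannot suffice. The resolution must refine this approach --- presumably by iterating or recursively applying the BPB construction so as to drive the off-$S$ Fourier mass below $\varepsilon_{2}$ in $L^{1}$-norm while preserving the $(1+\varepsilon_{1})\|\mu\|$-bound on the running convolution, for instance by convolving $\mu$ successively with BPB polynomials $f^{(1)},\ldots,f^{(K)}$ for $S$ with geometrically decaying parameters $\varepsilon^{(k)}$ arranged so that $\prod_{k}(1+\varepsilon^{(k)})\leq 1+\varepsilon_{1}$. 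The real-valuedness of $\widehat{\mu}$ together with the bound $\|\mu\|\leq M$ restricts the integer values of $(\widehat{\mu})_{\mathbb{Z}}$ to the finite palette $\{-\lfloor M+1/2\rfloor,\ldots,\lfloor M+1/2\rfloor\}$, providing the combinatorial rigidity needed to close such an iteration. The function $\delta'(\varepsilon_{1},\varepsilon_{2},M)$ then emerges from the quantitative estimates controlling this iteration, independently of $|S|$.
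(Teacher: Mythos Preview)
Your diagnosis of the obstacle is exactly right: a single BPB application gives an error term that depends on $|S|$, and $\delta'$ is not permitted to depend on $|S|$. But the proposed fix---iterating BPB polynomials $f^{(1)},\dots,f^{(K)}$ for $S$ with shrinking $\varepsilon^{(k)}$---does not close the gap. Convolving $\mu$ by all of them amounts to convolving by a single polynomial $f^{(1)}\ast\cdots\ast f^{(K)}$ which is again $1$ on $S$ with $L^1$-norm $\leq\prod_k(1+\varepsilon^{(k)})$; the off-$S$ Fourier support only grows (into sumsets of the individual supports), so the crude bound $\|\mu_{\mathbb{Z}}-\mu\ast f\|\leq d\cdot(\text{const})\cdot|\supp\widehat{f}|$ still carries an exponent of $|S|$. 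The observation that $(\widehat{\mu})_{\mathbb{Z}}$ takes values in a finite palette bounded by $M$ is true but is not by itself enough ``combinatorial rigidity'': there are functions on $\widehat{G}$ with values in $\{0,1\}$, algebra norm $\leq M$, and arbitrarily large support, so nothing in your sketch bounds $|S|$ or replaces the need for such a bound.

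What is actually missing is structural information about $S$, and the paper obtains it from the \emph{quantitative idempotent theorem} (Theorem~\ref{qit}, after Green--Sanders): any real $f$ on a finite group with $\|f\|_{A(G)}\leq M$ and $d(f,\mathbb{Z})$ small has $f_{\mathbb{Z}}=\sum_{i\leq l}g_i$ with $l\leq 2M$, each $g_i$ constant on cosets of some $H_i\leq G$ and supported on $O_M(1)$ such cosets. The proof of Theorem~\ref{skn} first reduces to finite $G$ by a standard limiting argument, then (Theorem~\ref{tws}) runs an induction on (essentially) $\lfloor 2M\rfloor$: using Lemma~\ref{corkey} one finds a subgroup $H$ so that $f\ast m_H$ is again almost-integer-valued, $(f\ast m_H)_{\mathbb{Z}}$ is supported on $O_{M,\eta}(1)$ cosets of a slightly larger subgroup $K$, and $\|f\ast m_H\|_{A(G)}\geq\tfrac12$. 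A BPB polynomial \emph{on $K^\perp$} for that bounded coset family (not for $S$!) then controls $\|(f\ast m_H)_{\mathbb{Z}}\|_{A(G)}$ with error depending only on $M$ and the chosen $\eta$; meanwhile $\|f-f\ast m_H\|_{A(G)}\leq M-\tfrac12$, so the inductive hypothesis applies to the remainder. The point is that BPB is applied not to $S$ but to a set of cosets whose cardinality is bounded in terms of $M$ alone, and this is exactly what the idempotent theorem supplies.
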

The function $\delta'$ above depends on three parameters, the first two of which represent `errors' -- one multiplicative and one additive.  These could be combined since we have not paid attention to the quantitative aspects of this work, but we have kept them separate to aid understanding in the proofs that follow, as these errors occur in different ways.

We shall prove Theorem \ref{skn}, by first proving it for finite groups and then applying a standard limiting argument. To prove the finite version we need to record some more tools which will be more familiar to those working in `discrete analysis'.

First, the process of passing to the nearest integer respects algebraic properties if the error is small enough:
\begin{lem}\label{aproxadd}
Suppose that $d(f,\mathbb{Z}),d(g,\mathbb{Z}),d(h,\mathbb{Z})<\frac{1}{3}$ and $f=g+h$. Then the functions $f_\mathbb{Z},g_\mathbb{Z},h_\mathbb{Z}$ are well-defined, and $ f_\mathbb{Z}=g_\mathbb{Z}+h_\mathbb{Z}$.
\end{lem}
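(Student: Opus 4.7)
The plan is to reduce the identity $f_\mathbb{Z}=g_\mathbb{Z}+h_\mathbb{Z}$ to a pointwise statement and then exploit the fact that the difference of two integers is an integer, so any strict bound below $1$ forces equality.

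First I would observe that because $d(f,\mathbb{Z}),d(g,\mathbb{Z}),d(h,\mathbb{Z})<\frac{1}{3}<\frac{1}{2}$, the definition of the nearest-integer function preceding the lemma immediately gives that $f_\mathbb{Z}$, $g_\mathbb{Z}$, $h_\mathbb{Z}$ are well-defined, and satisfy $\|f-f_\mathbb{Z}\|_\infty,\|g-g_\mathbb{Z}\|_\infty,\|h-h_\mathbb{Z}\|_\infty<\frac{1}{3}$. This disposes of the first part of the conclusion.

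For the additivity, my plan is to fix $x\in G$ and estimate the integer $f_\mathbb{Z}(x)-g_\mathbb{Z}(x)-h_\mathbb{Z}(x)$. By the triangle inequality,
\begin{equation*}
|f_\mathbb{Z}(x)-g_\mathbb{Z}(x)-h_\mathbb{Z}(x)| \leq |f_\mathbb{Z}(x)-f(x)|+|f(x)-g(x)-h(x)|+|g(x)-g_\mathbb{Z}(x)|+|h(x)-h_\mathbb{Z}(x)|.
\end{equation*}
The middle term vanishes by hypothesis $f=g+h$, while each of the remaining three terms is strictly less than $\frac{1}{3}$, so the total is strictly less than $1$. Since the quantity on the left is an integer of absolute value less than $1$, it must equal $0$, which gives $f_\mathbb{Z}(x)=g_\mathbb{Z}(x)+h_\mathbb{Z}(x)$. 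As $x$ was arbitrary, this completes the proof.

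There is no serious obstacle here; the statement is essentially a bookkeeping lemma whose only content is the calibration of the threshold $\frac{1}{3}$: it is chosen precisely so that three terms of that size add to strictly less than $1$, which is exactly what is needed to pin down an integer. The same argument would fail with threshold $\frac{1}{2}$, which is why the lemma specifies $\frac{1}{3}$ rather than the less restrictive condition sufficient for well-definedness alone.
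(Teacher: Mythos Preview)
Your proof is correct and follows essentially the same approach as the paper: both argue pointwise that $f_\mathbb{Z}(x)-g_\mathbb{Z}(x)-h_\mathbb{Z}(x)$ is an integer of absolute value strictly less than $1$ (via the triangle inequality and the hypothesis $f=g+h$), hence zero. Your additional remarks on well-definedness and the role of the threshold $\tfrac{1}{3}$ are accurate and make explicit what the paper leaves implicit.
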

\begin{proof}
Since $f(x)-(g(x)+h(x))=0$, by the triangle inequality
\begin{align*}
|f_\mathbb{Z}(x)-(g_\mathbb{Z}(x)+h_\mathbb{Z}(x))| & = |f(x)-f_\mathbb{Z}(x) - ((g(x)-g_\mathbb{Z}(x)) + (h(x)-h_\mathbb{Z}(x)))|\\ & \leq d(f,\mathbb{Z})+d(g,\mathbb{Z})+d(h,\mathbb{Z})<1.
\end{align*}
The result is proved since the left hand side is an integer and so it must be $0$.\end{proof}
\subsection*{Annihilators} For $G$ finite the annihilator of $K \leq G$ is
\begin{equation*}
K^\perp:=\{\gamma \in \widehat{G}:\gamma(x)=1\text{ for all }x \in K\}.
\end{equation*}
We write $m_K$ for the uniform probability measure on $G$ with support in $K$ \emph{i.e.\ }
\begin{equation*}
m_K(A)=|A\cap K|/|K|\text{ for all }A \subset G,
\end{equation*}
so in particular $m_G=\mu_G$.
\begin{lem}\label{anncount}
Suppose that $G$ is finite and $K \leq G$. Then $\widehat{m_K}=1_{K^\perp}$; $|K^\perp|\mu_G(K)=1$; and $f:G \rightarrow \mathbb{C}$ is constant on cosets of $K$ if and only if $\supp \widehat{f} \subset K^\perp$. 
\end{lem}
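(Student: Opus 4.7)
The plan is to prove the three assertions in order, each following from a short, standard Fourier-analytic calculation on a finite abelian group, and then to use the earlier parts to streamline the later ones.

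For the first assertion, I would compute directly from the definitions: since $m_K$ is the uniform measure on $K$,
\begin{equation*}
\widehat{m_K}(\gamma)=\int_G \gamma(-x)\,dm_K(x)=\frac{1}{|K|}\sum_{x\in K}\gamma(-x).
\end{equation*}
If $\gamma\in K^\perp$ then every summand is $1$, giving the value $1$. Otherwise the restriction $\gamma|_K$ is a non-trivial character of the finite group $K$, so by orthogonality of characters on $K$ the sum vanishes. Thus $\widehat{m_K}=1_{K^\perp}$, as claimed.

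For the second assertion, I would apply Plancherel. Write $m_K=h_K\,d\mu_G$ where $h_K:=(|G|/|K|)1_K$, so that $h_K\in L^2(G)$ and $\widehat{h_K}=\widehat{m_K}=1_{K^\perp}$. A direct computation gives
\begin{equation*}
\|h_K\|_2^2=\int_G h_K^2\,d\mu_G=\left(\frac{|G|}{|K|}\right)^{2}\mu_G(K)=\frac{1}{\mu_G(K)}.
\end{equation*}
On the other hand, Plancherel in the normalisation used here (where $\mu_G$ is the probability Haar measure and $\widehat{G}$ carries counting measure) gives $\|h_K\|_2^2=\sum_{\gamma\in\widehat{G}}|\widehat{h_K}(\gamma)|^2=|K^\perp|$. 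Comparing yields $|K^\perp|\mu_G(K)=1$.

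For the third assertion, the key observation is that $f$ is constant on cosets of $K$ if and only if $m_K\ast f=f$. The forward direction is immediate from $(m_K\ast f)(x)=\frac{1}{|K|}\sum_{k\in K}f(x-k)$, which equals $f(x)$ once $f$ is constant on each coset of $K$. The converse follows because $m_K\ast f$ is always constant on cosets of $K$: for any $k_0\in K$, reindexing $k\mapsto k-k_0$ shows $(m_K\ast f)(x+k_0)=(m_K\ast f)(x)$. Taking Fourier transforms and using the first part converts the identity $m_K\ast f=f$ into $1_{K^\perp}\cdot\widehat{f}=\widehat{f}$, which is exactly $\supp\widehat{f}\subset K^\perp$, and the uniqueness of the Fourier transform makes this equivalence two-sided.

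There is no genuine obstacle here; everything is standard finite Fourier analysis. The only point requiring care is tracking the normalisation convention (Haar probability measure on $G$, counting measure on $\widehat{G}$) so that Plancherel in the second assertion yields the factor $|G|/|K|$ rather than some rescaled version.
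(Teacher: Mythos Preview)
Your proof is correct and uses the same standard finite Fourier analysis ingredients as the paper (character orthogonality, Parseval/Plancherel, Fourier inversion). The only cosmetic difference is the ordering: the paper establishes the third assertion first by direct computation and then deduces the first from it by applying the third to $1_K$, whereas you prove the first directly via orthogonality and then obtain the third through the convolution characterisation $m_K\ast f=f$.
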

\begin{proof}
We prove the last part first: the `if' direction is by Fourier inversion: if $k \in K$ and $x \in G$ then
\begin{equation*}
f(x+k)=\sum_{\gamma \in \widehat{G}}{\widehat{f}(\gamma)\gamma(x+k)} = \sum_{\gamma \in K^\perp}{\widehat{f}(\gamma)\gamma(x+k)} = \sum_{\gamma \in K^\perp}{\widehat{f}(\gamma)\gamma(x)}=f(x).
\end{equation*}
Conversely, for `only if', given $k \in K$ we have
\begin{align*}
\gamma(k)\widehat{f}(\gamma)& =\int{f(x)\gamma(k-x)d\mu_G(x)} \\ &= \int{f(y+k)\gamma(-y)d\mu_G(y+k)}=\int{f(y)\gamma(-y)d\mu_G(y)}=\widehat{f}(\gamma).
\end{align*}
If $\gamma \not \in K^\perp$ then we can take $k \in K$ with $\gamma(k) \neq 1$, and hence $\widehat{f}(\gamma)=0$. The last part is proved.

If $\gamma \in K^\bot$ then certainly
\begin{equation*}
\widehat{m_K}(\gamma)=\int{\gamma(-x)dm_K(x)}=\int{dm_K(x)}=1,
\end{equation*}
and since $\widehat{m_K}=\mu_G(K)^{-1}\widehat{1_K}$, and $1_K$ is constant on cosets of $K$ we have $\widehat{m_K}(\gamma)=0$ for $\gamma \not \in K^\bot$; in summary $\widehat{m_K}=1_{K^\perp}$. Finally, by Parseval's Theorem we have
 \begin{equation*}
 |K^\perp|\mu_G(K)^2 = \sum_{\gamma \in \widehat{G}}{|\widehat{1_K}(\gamma)|^2} = \int{1_K^2d\mu_G}=\mu_G(K)
 \end{equation*}
completing the lemma when we divide by $\mu_G(K)>0$.
\end{proof}

\subsection*{The quantitive idempotent theorem} Cohen's idempotent theorem \cite{c} describes the structure idempotents in $M(G)$. We shall use a quantitive version from \cite{gs} which is stated dually, for the algebra norm. This is the norm
\begin{equation}\label{dual}
\|f\|_{A(G)}:=\sum_{\gamma\in\widehat{G}}{|\widehat{f}(\gamma)|} = \sup\left\{\left|\sum_{\gamma \in \widehat{G}}{\widehat{f}(\gamma)\overline{\psi(\gamma)}}\right|: \|\psi\|_\infty \leq 1\right\}.
\end{equation}
 In fact the proof proceeds for the wider class of almost-integer-valued functions, and the next result can be read out of the arguments in \cite{gs}, but is also an immediate consequence of \cite[Theorem 10.1]{s2}.
\begin{thm}\label{qit} There are functions $\delta'':[1,\infty) \rightarrow (0,1/2)$ and $F'':[1,\infty) \rightarrow \mathbb{N}$ such that if $M \geq 1$, $G$ is finite, and $f:G \rightarrow \mathbb{R}$ has $\|f\|_{A(G)} \leq M$ and $d(f,\mathbb{Z}) \leq \delta''(M)$, then there are $l \leq 2M$, $H_1,\dots,H_l \leq G$, and functions $g_1,\dots,g_l:G \rightarrow \mathbb{Z}$ such that $f_\mathbb{Z} = g_1+\cdots +g_l$, and for all $1 \leq i \leq l$, $g_i$ is constant on cosets of $H_i$, is supported on at most $F''(M)$ cosets of $H_i$, and $\|g_i\|_\infty \leq F''(M)$.
\end{thm}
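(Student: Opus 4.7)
Our plan is to deduce Theorem \ref{qit} from the integer-valued quantitative idempotent theorem of Green--Sanders \cite{gs}, exactly as the preamble to the statement suggests; alternatively, the result can be read directly from \cite[Theorem 10.1]{s2}. The strategy has three steps.

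First, arrange that $\delta''(M) < 1/2$ so that $f_\mathbb{Z}$ is well-defined as an integer-valued function on $G$. The main technical difficulty at this point is that $\|f-f_\mathbb{Z}\|_\infty \leq \delta''(M)$ is only an $L^\infty$-bound, and small $L^\infty$-norm does not translate into small $A(G)$-norm in general; consequently one cannot simply apply the triangle inequality to bound $\|f_\mathbb{Z}\|_{A(G)}$ and reduce to the integer-valued case. Instead, one follows the iterative scheme of \cite{gs}: at each step identify a structured piece $1_{x+H}$ that correlates strongly with the current residual, subtract off $\pm 1_{x+H}$, and record that the $A(G)$-norm of the residual drops by at least $1$. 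By taking $\delta''(M)$ sufficiently small, the cumulative rounding error stays below $1/2$ throughout, and after at most $l \leq 2M$ steps the residual has $A(G)$-norm less than $1$ and rounds to zero. This produces a decomposition $f_\mathbb{Z} = \sum_{j=1}^L \epsilon_j 1_{x_j + H_j'}$ with $\epsilon_j \in \{-1,+1\}$, $L \leq F_0(M)$ for some explicit function $F_0$, and at most $2M$ distinct subgroups $H_j'$ appearing.

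Second, group the coset indicators by their underlying subgroup: let $H_1,\dots,H_l$ enumerate the distinct subgroups (so $l \leq 2M$), and set $g_i := \sum_{j : H_j'=H_i} \epsilon_j 1_{x_j+H_i}$ for each $i$. Since $\widehat{g_i}$ is supported in $H_i^\perp$, Lemma \ref{anncount} shows $g_i$ is constant on cosets of $H_i$; the support of $g_i$ lies in at most $L \leq F_0(M)$ cosets of $H_i$, and $\|g_i\|_\infty \leq L \leq F_0(M)$. Setting $F''(M):=F_0(M)$ yields the required decomposition $f_\mathbb{Z}=g_1+\dots+g_l$.

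The main obstacle is the first step: extending the quantitative integer-valued Cohen theorem to the almost-integer-valued setting while preserving the \emph{linear} bound $l \leq 2M$ on the number of subgroups. This requires careful bookkeeping of how the $A(G)$-norm decrease interacts with the accumulated rounding errors throughout the iterative argument, and is precisely the technical content of \cite{gs} (and is made explicit in the almost-integer-valued framework of \cite[Theorem 10.1]{s2}).
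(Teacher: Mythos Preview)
Your proposal is correct and takes essentially the same approach as the paper: the paper does not prove Theorem~\ref{qit} at all but simply records that it ``can be read out of the arguments in \cite{gs}'' and ``is also an immediate consequence of \cite[Theorem~10.1]{s2}'', which is exactly what you do. Your sketch of the Green--Sanders iteration is a reasonable elaboration of that citation; one small inaccuracy is that in the actual argument each outer step peels off not a single coset indicator $\pm 1_{x+H}$ but a function supported on boundedly many cosets of a single subgroup (this is where the possibly large $F''(M)$ enters, while the linear bound $l\leq 2M$ counts the number of outer steps), but this does not affect the validity of your plan.
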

In fact we shall need a corollary of Theorem \ref{qit} which we record below in Lemma \ref{corkey}. This corollary is more naturally proved directly using the methods of \cite{gs}, and doing that would give better bounds, but at the expense of making this paper longer and less self-contained.
\begin{lem}\label{corkey}
There are functions $\delta':[1,\infty) \rightarrow (0,1/4)$ and $F':[1,\infty)\times (0,1/4] \rightarrow \mathbb{N}$ such that if $M \geq 1$, $G$ is finite, $f:G \rightarrow \mathbb{R}$ has $\|f\|_{A(G)} \leq M$ and $d(f,\mathbb{Z}) \leq \delta'(M)$, then either $f_\mathbb{Z}\equiv 0$, or for each $\eta \in (0,1/4]$ there is a subgroup $K(\eta) \leq G$ such that
\begin{enumerate}[label=(\alph*)]
\item\label{p1} $d(f\ast m_{K(\eta)},\mathbb{Z})\leq d(f,\mathbb{Z})+\eta$ whenever $0<\eta\leq \frac{1}{4}$;
\item\label{p2} $\|f\ast m_{K(\eta)}\|_{\infty}\geq\frac{1}{2}$ whenever $0<\eta\leq \frac{1}{4}$;
\item\label{p3} $\supp(f\ast m_{K(\eta)})_{\mathbb{Z}}$ is a union of $F'(M,\eta)$ cosets of $K(\eta)$, whenever $0<\eta\leq \frac{1}{4}$;
\item\label{p4} $K(\eta') \leq K(\eta)$ whenever $0<\eta' \leq \eta \leq \frac{1}{4}$.
\end{enumerate}
\end{lem}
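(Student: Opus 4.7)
The plan is to derive Lemma~\ref{corkey} from Theorem~\ref{qit} by taking $K(\eta)$ to be an intersection of a carefully chosen subfamily of the subgroups produced by that theorem, with $\eta$ governing how large a subfamily is used.

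I would set $\delta'(M) := \delta''(M)$ and apply Theorem~\ref{qit} to $f$ to obtain a decomposition $f_\mathbb{Z} = g_1 + \cdots + g_l$ with $l \leq 2M$, subgroups $H_1, \ldots, H_l \leq G$, and functions $g_i : G \to \mathbb{Z}$ constant on cosets of $H_i$, supported on at most $F''(M)$ cosets of $H_i$, with $\|g_i\|_\infty \leq F''(M)$. If $f_\mathbb{Z} \equiv 0$ the first alternative of the lemma holds, so I assume otherwise. The trial choice $K := \bigcap_i H_i$ already yields (a), (b), and (d): by Lemma~\ref{anncount} each $g_i \ast m_K = g_i$, so $f \ast m_K - f_\mathbb{Z} = (f - f_\mathbb{Z}) \ast m_K$ has $L^\infty$-norm at most $d(f,\mathbb{Z})$, giving (a); (b) follows because $f_\mathbb{Z}$ is nonzero and integer-valued, hence $\|f_\mathbb{Z}\|_\infty \geq 1$; and (d) is immediate for this $\eta$-independent $K$.

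The substantive content is (c), and this is where $\eta$ must enter. For the trial $K$ we have $(f \ast m_K)_\mathbb{Z} = f_\mathbb{Z}$, whose support is covered by at most $2MF''(M)\max_i[H_i:K]$ cosets of $K$, but Theorem~\ref{qit} does not bound $[G:H_i]$, so this is uncontrolled in $M$ alone. My remedy is to let $K(\eta) := \bigcap_{i \in I(\eta)} H_i$ where $I(\eta) := \{i : [G:H_i] \leq N(M,\eta)\}$ for a threshold $N(M,\eta)$ to be fixed later, yielding $[G:K(\eta)] \leq N(M,\eta)^{2M}$ and hence a value of $F'(M,\eta)$ of order $2MF''(M)\cdot N(M,\eta)^{2M}$. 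For $i \in I(\eta)$ one retains $g_i \ast m_{K(\eta)} = g_i$, while for $i \notin I(\eta)$ the function $g_i \ast m_{K(\eta)}$ is a rational linear combination of indicators of cosets of $K(\eta) + H_i$ with common denominator $[K(\eta)+H_i:H_i]$, giving $\|g_i \ast m_{K(\eta)}\|_\infty \leq F''(M)^2/[K(\eta)+H_i:H_i]$. If these denominators can be forced to be at least $2lF''(M)^2/\eta$, the discarded contributions aggregate to at most $\eta/2$ in $L^\infty$ and (a) retains the required $\eta$-slack, while (b) and (d) transfer over from the trial $K$ with the same arguments.

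The main obstacle I anticipate is precisely arranging this last condition, that the discarded denominators be large. This is not automatic from $[G:H_i] > N(M,\eta)$ alone, because $K(\eta) + H_i$ need not equal $G$, so $[K(\eta)+H_i:H_i] = [K(\eta):K(\eta) \cap H_i]$ could be as small as $1$. The fix I would try is an iterative construction — process the indices in increasing order of $[G:H_i]$, rebuilding $K(\eta)$ and re-applying Theorem~\ref{qit} to $f \ast m_{K(\eta)}$ whenever a discarded piece threatens to violate the error budget; the uniform bound $l \leq 2M$ at each re-application forces the process to terminate in a number of rounds depending only on $M$ and $\eta$, and the nesting required by (d) is then built in by parametrising over $\eta$ from large to small and only ever adding further subgroups to the intersection. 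This iterative bookkeeping, rather than any single inequality, is where I expect the bulk of the technical work to lie.
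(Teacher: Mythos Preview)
Your setup---apply Theorem~\ref{qit} once, take $K(\eta)$ as an intersection of a subfamily of the $H_i$, and bound the smoothed ``discarded'' pieces---matches the paper's, and you have correctly located the obstacle: selecting by $[G:H_i]$ gives no control on $[K(\eta):K(\eta)\cap H_i]$ for the discarded $i$. But your proposed remedy does not work. To re-apply Theorem~\ref{qit} to $f\ast m_{K(\eta)}$ you would need $d(f\ast m_{K(\eta)},\mathbb{Z})\leq\delta''(M')$, which is precisely conclusion~\ref{p1} and not yet available; and even granting that, nothing in your sketch decreases from one round to the next---each re-application just returns a fresh collection of $\leq 2M$ subgroups, possibly no better behaved than the previous one---so the termination claim is unsupported. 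The nesting~\ref{p4} is also in danger once different values of $\eta$ are allowed to use different decompositions.

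The paper avoids any iteration by changing the selection criterion. After one application of Theorem~\ref{qit} (and after discarding any proper subcollection of the $g_i$ summing to zero), it greedily reorders the $H_i$ so that the partial intersections $K_k:=H_1\cap\cdots\cap H_k$ shrink as slowly as possible, i.e.\ $|K_{k-1}\cap H_k|\geq|K_{k-1}\cap H_j|$ for all $j\geq k$. Then $K(\eta):=K_{k(\eta)}$, where $k(\eta)$ is the first index at which $|K_k\cap H_{k+1}|/|K_k|$ drops below $\eta/(lF''(M)^2)$. For $j\leq k(\eta)$ one has $K(\eta)\leq H_j$ and hence $g_j\ast m_{K(\eta)}=g_j$; for $j>k(\eta)$ the greedy ordering forces $|K(\eta)\cap H_j|/|K(\eta)|\leq|K(\eta)\cap H_{k(\eta)+1}|/|K(\eta)|\leq\eta/(lF''(M)^2)$, which directly bounds $\|g_j\ast m_{K(\eta)}\|_\infty$ and yields~\ref{p1} and~\ref{p2}. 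Telescoping the same ratio threshold bounds $|H_1|/|K(\eta)|$ from above and gives~\ref{p3}, while~\ref{p4} is automatic because $k(\eta)$ is monotone in $\eta$. The missing idea in your proposal, then, is to select by \emph{intersection ratios} $|K_k\cap H_{k+1}|/|K_k|$ rather than by the indices $[G:H_i]$, coupled with the greedy ordering that makes all discarded ratios simultaneously small.
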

\begin{proof}
Mindful of where the proof will go, we take $\delta'(M):=\delta''(M)/2$ and
 \begin{equation}\label{FMeta}
     F'(M,\eta):= \left \lceil \max{\left\{ \eta^{-(l-1)}l^l F''(M)^{2l-1}:1 \leq l \leq 2M\right\}}\right\rceil
 \end{equation}
 where $\delta''$ and $F''$ are the functions in Theorem \ref{qit}.
 
 Now, suppose $M \geq 1$, $G$ is finite, $f:G \rightarrow \mathbb{R}$ has $\|f\|_{A(G)} \leq M$ and $\delta:=d(f,\mathbb{Z}) \leq \delta'(M)$. Since $\delta'(M)\leq \delta''(M)$ Theorem \ref{qit} gives us $l \leq 2M$, $H_1,\dots, H_l \leq G$, and functions $g_1,\dots,g_l:G \rightarrow \mathbb{Z}$ such that $f_\mathbb{Z}=g_1+\cdots + g_l$ where for each $i \in \{1,\dots,l\}$, the function $g_i$ is constant on cosets of $H_i$, is supported on at most $F''(M)$ cosets of $H_i$, and has $\|g_i\|_\infty \leq F''(M)$.
 
Suppose $f_\mathbb{Z} \not \equiv 0$, then we may also assume that no proper non-empty subset of the $g_i$s sums to be identically $0$ -- if there were such a sum we could just remove those $H_i$s and $g_i$s.

It will be notationally convenient to reorder the $H_i$s so that for all $k \in \{1,\dots,l\}$ we have
\begin{equation}\label{reorder}
|H_1\cap \dots \cap H_{k-1} \cap H_{k}|  \geq |H_1\cap \dots \cap H_{k-1} \cap H_{j}| \text{ for all }j \in\{ k,\dots,l\}
\end{equation}
with the convention that if $k=1$ (\ref{reorder}) just says that $|H_1| \geq |H_j|$ for all $j \in \{1,\dots,l\}$. This reordering can just be done greedily, and of course need not be unique.

For $k \in \{1,\dots,l\}$ write $K_k:=H_1\cap \dots \cap H_k$, so that $K_1 \geq \dots \geq K_l$.  For each $\eta \in (0,\frac{1}{4}]$ let
\begin{equation}\label{neta}
  k(\eta):=\min{\left\{k \in \{1,\dots,l-1\}:  \frac{|K_{k} \cap H_{k+1}|}{|K_{k}|}\leq \frac{\eta}{lF''(M)^2}\right\}\cup \{l\}}.
\end{equation}
With this we put $K(\eta):=K_{k(\eta)}$, so that if $\eta'\leq \eta$ then $k(\eta) \leq k(\eta')$, and so $K(\eta') \leq K(\eta)$. This establishes property \ref{p4} of the $K(\eta)$s. To show \ref{p1}, \ref{p2}, and \ref{p3} we shall simplify notation and write $k:=k(\eta)$ and $K:=K(\eta)$.

First we estimate the size of $K$: By (\ref{neta}), for $j< k$,
\begin{equation*}
\frac{|K_{j+1}|}{|K_j|}=\frac{|K_{j}\cap H_{j+1}|}{|K_j|} >\frac{\eta}{lF''(M)^2}.
\end{equation*}
Hence by telescoping,
 \begin{equation}\label{lowerK}
    |K|=|K_{k}|=\frac{|K_k|}{|K_{k-1}|}\cdot\ldots\cdot \frac{|K_2|}{|K_1|}\cdot |K_1| >\left(\frac{\eta}{lF''(M)^2}\right)^{k-1}|K_{1}| \geq  \left(\frac{\eta}{lF''(M)^{2}}\right)^{l-1}|H_{1}|.
 \end{equation}

Now to our main estimates: $K \leq H_j$ for all $j \leq k$ and, as $g_{j}$ is constant on cosets of $H_{j}$ for all $j$, we have
\begin{equation}\label{smooth}
\sum_{j \leq k}{g_j \ast m_K}= \sum_{j \leq k}{g_j}.
\end{equation}
Write $\mathcal{W}_j$ for the set of cosets of $H_j$ on which $g_j$ is supported. Then
\begin{equation*}
|g_j(y)| \leq \sum_{x+H_j \in \mathcal{W}_j}{\|g_j\|_\infty\cdot 1_{x+H_j}(y)} \text{ for all } y \in G,
\end{equation*}
and so by the triangle inequality and the bounds $\|g_j\|_\infty \leq F''(M)$ and $|\mathcal{W}_j|\leq F''(M)$, we have
\begin{align}
\nonumber \left \|\sum_{j >k}{g_j \ast m_K}\right\|_\infty & \leq \sum_{j >k}{\|g_j\|_\infty \cdot |\mathcal{W}_j|\cdot \max_{x \in G}{\|1_{x+H_j} \ast m_K\|_\infty}}\\ \nonumber & = \sum_{j >k}{\|g_j\|_\infty \cdot |\mathcal{W}_j|\cdot \|1_{H_j} \ast m_K\|_\infty}\\ \label{last}& \leq F''(M)^2\cdot \sum_{j>k}{\frac{|K \cap H_j|}{|K|}}.
\end{align}
The equality $\|1_{H_j} \ast m_K\|_\infty=\frac{|K \cap H_j|}{|K|}$ follows from the fact that the intersection of two cosets of two subgroups is either empty or a coset of the intersection of these subgroups.

Now, if $k<l$ then by (\ref{last}), and then (\ref{reorder}) and (\ref{neta}) we have
\begin{equation}\label{smooth2}
 \left \|\sum_{j >k}{g_j \ast m_K}\right\|_\infty  \leq F''(M)^2\cdot \sum_{j>k}{\frac{|K \cap H_j|}{|K|}}\leq  F''(M)^2\cdot l \cdot \frac{|K \cap H_{k+1}|}{|K|} \leq \eta.
\end{equation}
Of course, if $k=l$ the sum on the left is empty and so the inequality certainly also holds.

By the triangle inequality and (\ref{smooth}) and (\ref{smooth2})
\begin{align*}
\left \|f \ast m_K -  \sum_{j \leq k}{g_j}\right\|_\infty & \leq \|(f-f_\mathbb{Z})\ast m_K\|_\infty + \left\|f_\mathbb{Z}\ast m_K - \sum_{j  \leq k}{g_j}\right\|_\infty\\ & \leq \delta+ \left\|\left(g_1+\cdots + g_l\right)\ast m_K - \sum_{j  \leq k}{g_j\ast m_K}\right\|_\infty \leq \delta+\eta,
\end{align*}
whence $d(f\ast m_K,\mathbb{Z}) \leq \delta + \eta$, and \ref{p1} is established. Since $\delta +\eta \leq \delta'(M)+1/4<1/2$, we may also conclude that
\begin{equation}\label{dec}
(f \ast m_K)_\mathbb{Z} = \sum_{j  \leq k}{g_j}.
\end{equation}
The right hand side is not identically $0$ and hence $\|f \ast m_K\|_\infty \geq \frac{1}{2}$ \emph{i.e.\ }\ref{p2} is shown. Finally, the right hand side of (\ref{dec}) is non-zero on at most 
\begin{equation*}
\sum_{j  \leq k}{|\mathcal{W}_j| \cdot \frac{|H_{j}|}{|K \cap H_{j}|}}\leq F''(M)\cdot l \cdot \frac{|H_{1}|}{|K|} \leq F'(M,\eta)
\end{equation*}
 cosets of $K$. The first inequality is valid since $|H_j| \leq |H_1|$ for all $j$ and $|K \cap H_j| = |K|$ for all $j \leq k$; the second inequality follows from (\ref{lowerK}) and the definition of $F'(M,\eta)$ in (\ref{FMeta}). This gives \ref{p3} and the proof of the lemma is complete.
\end{proof}

\begin{thm}\label{tws}
There is a function $\delta:(0,1]\times (0,1]\times [1,\infty) \rightarrow (0,1/2)$ such that if $M\geq 1$ and $\varepsilon_1,\varepsilon_2\in (0,1]$, $G$ is finite, $f:G\rightarrow\mathbb{R}$ has $\|f\|_{A(G)}\leq M$ and $d(f,\mathbb{Z})\leq\delta(\varepsilon_1,\varepsilon_2,M)$, then $\|f_{\mathbb{Z}}\|_{A(G)}\leq (1+\varepsilon_1)\|f\|_{A(G)}+\varepsilon_2$.
\end{thm}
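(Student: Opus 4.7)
The plan is to iterate Lemma \ref{corkey} to peel off $f_\mathbb{Z}$ layer by layer. Set $f_0:=f$, and at step $n$, if $(f_n)_\mathbb{Z}\not\equiv 0$, apply Lemma \ref{corkey} to $f_n$ at a level $\eta_n\in(0,1/4]$ (to be chosen) to obtain $K_n:=K(\eta_n)$; set $h_n:=(f_n\ast m_{K_n})_\mathbb{Z}$ and $f_{n+1}:=f_n-h_n$. Because each $h_n$ is integer-valued, Lemma \ref{aproxadd} inductively preserves $d(f_{n+1},\mathbb{Z})=d(f_n,\mathbb{Z})\leq\delta:=d(f,\mathbb{Z})$, so Lemma \ref{corkey} remains applicable as long as $\|f_n\|_{A(G)}$ stays within a bound of $M$; once the iteration terminates at step $N$, telescoping gives $f_\mathbb{Z}=\sum_{n<N}h_n$ (since $\sum_{n<N}h_n$ is integer-valued and within $\delta<1/2$ of $f$).

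The Fourier-side bookkeeping is the engine. By Lemma \ref{anncount}, $\widehat{h_n}$ is supported on $K_n^\perp$ and $\widehat{f_n\ast m_{K_n}}|_{K_n^\perp}=\widehat{f_n}|_{K_n^\perp}$. Putting $e_n:=h_n-f_n\ast m_{K_n}$ (so $\|e_n\|_\infty\leq\delta+\eta_n$), one gets $\widehat{f_{n+1}}|_{K_n^\perp}=-\widehat{e_n}|_{K_n^\perp}$ and $\widehat{f_{n+1}}=\widehat{f_n}$ off $K_n^\perp$. Splitting the $A(G)$-norm across $K_n^\perp$ and its complement and using $\|f_n\ast m_{K_n}\|_\infty\geq 1/2$ from Lemma \ref{corkey}\ref{p2}, one deduces
\[
\|h_n\|_{A(G)}\leq\|f_n\|_{A(G)}-\|f_{n+1}\|_{A(G)}+2\|e_n\|_{A(G)}
\]
together with $\|f_n\|_{A(G)}-\|f_{n+1}\|_{A(G)}\geq\tfrac{1}{2}-\|e_n\|_{A(G)}$. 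Summing the first telescopes to $\|f_\mathbb{Z}\|_{A(G)}\leq\|f\|_{A(G)}+2\sum_n\|e_n\|_{A(G)}$, while the second forces termination in $N\leq 4M$ steps provided each $\|e_n\|_{A(G)}\leq 1/4$.

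The main obstacle is bounding $\|e_n\|_{A(G)}$ in terms of $M,\varepsilon_1,\varepsilon_2$ alone and \emph{not} $|G|$: the trivial Plancherel bound $\|e_n\|_{A(G)}\leq\sqrt{|G/K_n|}\,(\delta+\eta_n)$ fails because $|G/K_n|$ has no a priori bound. The strategy is to combine the structural coset-support information from Lemma \ref{corkey}\ref{p3} (that $h_n$ lives on at most $F'(M,\eta_n)$ cosets of $K_n$ with $\|h_n\|_\infty\leq M+1$) with the BPB construction (Theorem \ref{BPB}) applied on the quotient $G/K_n$ at scale $\sim\varepsilon_1/(NM)$: this yields a trigonometric polynomial $p_n$ with $\|p_n\|_1\leq 1+\varepsilon_1/(8NM)$ and $|\supp\widehat{p_n}|$ bounded in $M,\varepsilon_1,\eta_n$, whose spectrum is chosen to cover the essential part of $h_n$'s spectrum in a controlled way. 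Replacing the raw difference $e_n$ by its $p_n$-convolved version absorbs the $(1+\varepsilon_1)$-multiplicative slack through $\|p_n\|_1$ and leaves a residual additive error that can be driven below $(\varepsilon_1M+\varepsilon_2)/(8M)$ per step by taking $\delta$ a sufficiently small function of $(\varepsilon_1,\varepsilon_2,M)$.

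Finally, choosing $(\eta_n)$ decaying so $\sum_n\eta_n\leq\varepsilon_2/(16M)$ and $\delta$ as above, one has $\sum_{n<4M}\|e_n\|_{A(G)}\leq(\varepsilon_1M+\varepsilon_2)/2$, and the telescoping bound of paragraph two yields $\|f_\mathbb{Z}\|_{A(G)}\leq(1+\varepsilon_1)\|f\|_{A(G)}+\varepsilon_2$. The hardest step remains the $|G|$-free error control in paragraph three, where the quantitative trade-off between Lemma \ref{corkey}\ref{p3} and BPB must be carefully balanced so that all constants depend only on $(\varepsilon_1,\varepsilon_2,M)$.
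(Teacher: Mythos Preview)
Your iterative framework is close in spirit to the paper's argument, and your Fourier-side bookkeeping in the second paragraph is correct.  The genuine gap is in the third paragraph, precisely at the point you flag as hardest.  Any duality/BPB argument along the lines you sketch produces an additive error of the shape $(\delta+\eta_n)\cdot(C/\varepsilon_1)^{O(F'(M,\eta_n))}$: the $\delta$-part can indeed be killed by choosing $\delta$ small in terms of $(\varepsilon_1,\varepsilon_2,M)$, but the $\eta_n$-part cannot.  Since $F'(M,\eta)$ grows like $\eta^{-(l-1)}$ as $\eta\to 0$ (see (\ref{FMeta})), the product $\eta_n\cdot(C/\varepsilon_1)^{O(F'(M,\eta_n))}$ diverges, so no single choice of $\eta_n$ makes the per-step error small.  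In particular your final paragraph's stipulation $\sum_n\eta_n\leq\varepsilon_2/(16M)$ does not control $\sum_n\|e_n\|_{A(G)}$, because the multiplier attached to $\eta_n$ is not bounded uniformly in $n$.

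What is missing is the two-scale pigeonhole device, which is exactly why Lemma~\ref{corkey} carries the monotonicity clause \ref{p4} that you never use.  At each step the paper runs a decreasing chain $\eta_0>\eta_1>\cdots>\eta_l$ (with $l\sim M$) and, since the groups $K(\eta_i)$ are nested, pigeonholes to find $i_0$ with $\|f\ast m_{K(\eta_{i_0})}-f\ast m_{K(\eta_{i_0+1})}\|_{A(G)}<1/3$; this forces $(f\ast m_{K(\eta_{i_0})})_\mathbb{Z}=(f\ast m_{K(\eta_{i_0+1})})_\mathbb{Z}$.  Now the integer part is supported on $F'(M,\eta_{i_0})$ cosets of the \emph{coarser} group $K(\eta_{i_0})$ (so the BPB support bound involves only $\eta_{i_0}$), while the relevant distance-to-integers is at most $2\eta_{i_0+1}$ (the \emph{finer} scale).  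Because $\eta_{i_0+1}$ is chosen \emph{after} $\eta_{i_0}$, one can take it small enough that $\eta_{i_0+1}\cdot(C/\varepsilon_1)^{O(F'(M,\eta_{i_0}))}<\varepsilon_2/2$.  This decoupling of the scale governing coset-count from the scale governing the error is the missing ingredient; once it is inserted at each step, your iteration (which is essentially the paper's induction on $\lceil 2M\rceil$, unrolled) goes through.
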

\begin{proof}
For $m \geq 1$ a natural  number write $A_m:=(0,1]\times (0,1]\times [m/2,(1+m)/2)$. We shall define $\delta_m:A_m \rightarrow (0,1/2)$ such that
\begin{equation}\tag{$P_m$} \label{inductivestep}
\parbox{0.9\linewidth}{%
for all $(\varepsilon_1,\varepsilon_2,M) \in A_m$, $G$ finite, $f:G\rightarrow\mathbb{R}$ with $\|f\|_{A(G)}\leq M$ and $d(f,\mathbb{Z})\leq\delta_m(\varepsilon_1,\varepsilon_2,M)$, we have $\|f_{\mathbb{Z}}\|_{A(G)}\leq (1+\varepsilon_1)\|f\|_{A(G)}+\varepsilon_2$.
}
\end{equation}
The function $\delta$ is then simply the functions $\delta_m$ pieced together on $\bigcup_{m\geq 2}{A_m}$. (We exclude $A_1$ in the domain of $\delta$ in the statement of the theorem; it is convenient here as a base to start an induction.)

For $m=1$, set $\delta_1(\varepsilon_1,\varepsilon_2,M):=(1-M)/2 \in (0,1/2)$ and suppose that $(\varepsilon_1,\varepsilon_2,M) \in A_1$, $G$ is finite, $f:G\rightarrow\mathbb{R}$ has $\|f\|_{A(G)}\leq M$ and $d(f,\mathbb{Z})\leq\delta_1(M,\varepsilon_1,\varepsilon_2)$. If $f_\mathbb{Z}\not \equiv 0$ then
\begin{equation*}
M \geq \|f\|_{A(G)} \geq \|f\|_\infty \geq \|f_\mathbb{Z}\|_\infty - d(f,\mathbb{Z}) \geq 1-(1-M)/2,
\end{equation*}
and so $M \geq 1$ --  a contradiction to the fact $M \in [m/2,(1+m)/2)$. Hence $f_\mathbb{Z} \equiv 0$, and so
\begin{equation}\label{zerodone}
\|f_\mathbb{Z}\|_{A(G)}=0 \leq (1+\varepsilon_1)\|f\|_{A(G)}+\varepsilon_2
\end{equation}
and ($P_1$) is proved.

For what remains it will be useful to write $F'':[1,\infty) \times (0,1/4]\times (0,1] \rightarrow \mathbb{N}$ for
\begin{equation*}
F''(M,\eta,\varepsilon_1):=\left\lceil (1+\varepsilon_1)\left(\frac{C}{\varepsilon_1}\right)^{2F'(M,\eta)}\right \rceil
\end{equation*}
where $F'$ is as in the statement of Lemma \ref{corkey} and $C$ is as in Theorem \ref{BPB}.

Suppose that $m \geq 1$ is a natural number and assume that $\delta_m$ has been defined satisfying ($P_m$). To define $\delta_{m+1}$ we introduce some auxiliary variables and functions. For $(\varepsilon_1,\varepsilon_2,M) \in A_{m+1}$ note that $M \geq 1$ and $\left(\varepsilon_1,\frac{\varepsilon_2}{2},M-\frac{1}{2}\right) \in A_m$. Set $l:=\lceil 3M\rceil $ and
 \begin{equation}\label{eta0}
\eta_0:=\frac{1}{3}\delta_m\left(\varepsilon_1,\frac{\varepsilon_2}{2},M-\frac{1}{2}\right)\in (0,1/4]
\end{equation}
and 
\begin{equation}\label{etai+1}
\eta_{i+1}:=\min\left\{\eta_i,\frac{\varepsilon_{2}}{4}F''(M,\eta_{i},\varepsilon_{1})^{-1}\right\}\in (0,1/4]
\text{ for }0 \leq i <l.
\end{equation}
With these define $\delta_{m+1}(\varepsilon_1,\varepsilon_2,M):=\min\{\delta'(M),\eta_l\}$ where $\delta'$ is as in the statement of Lemma \ref{corkey}.

Now, suppose that $(\varepsilon_1,\varepsilon_2,M) \in A_{m+1}$, $G$ is finite, $f:G\rightarrow\mathbb{R}$ with $\|f\|_{A(G)}\leq M$ and $d(f,\mathbb{Z})\leq\delta_{m+1}(\varepsilon_1,\varepsilon_2,M)$. Let $l:=\lceil 3M\rceil$ and define $\eta_i$ for $0 \leq i \leq l$ as in (\ref{eta0}) and (\ref{etai+1}).

Since $d(f,\mathbb{Z}) \leq \delta_{m+1}(\varepsilon_1,\varepsilon_2,M) \leq \delta'(M)$ (where $\delta'$ is the function from Lemma \ref{corkey}), we may apply Lemma \ref{corkey} (either to get $f_\mathbb{Z}\equiv 0$ in which case we are done as in (\ref{zerodone}) or) to get a group $K(\eta)$ for each $\eta \in (0,1/4]$ such that these groups satisfy properties \ref{p1}--\ref{p4} of Lemma \ref{corkey}. In particular, since $\frac{1}{6}> \eta_{0}\geq \eta_{1} \geq \ldots\geq \eta_{l}>0$,
\begin{enumerate}[label=(\alph*)]
\item\label{prop1} $d(f\ast m_{K(\eta_i)},\mathbb{Z})\leq \delta_{m+1}(\varepsilon_1,\varepsilon_2,M) +\eta_i\leq 2\eta_i$ for all $0 \leq i \leq l$;
\item\label{prop2} $\|f\ast m_{K(\eta_i)}\|_{\infty}\geq\frac{1}{2}$ for all $0 \leq i \leq l$;
\item\label{prop3} $\supp(f\ast m_{K(\eta_i)})_{\mathbb{Z}}$ is a union of $F'(M,\eta_i)$ cosets of $K(\eta_i)$ for all $0 \leq i \leq l$;
\item\label{prop4}  $K(\eta_0) \geq \dots \geq K(\eta_l)$.
\end{enumerate}
In view of the definition of annihilators, the nesting in \ref{prop4} reverses when we take annihilators \emph{i.e.\ }$K(\eta_{l})^{\bot}\geq\ldots \geq K(\eta_{0})^{\bot}$.  Thus, since $\widehat{m_{K}}=1_{K^{\bot}}$ for any $K\leq G$, we have
\begin{equation*}
\sum_{i=0}^{l-1}\|f\ast m_{K(\eta_{i})}-f\ast m_{K(\eta_{i+1})}\|_{A(G)}=\sum_{i=0}^{l-1}{\sum_{\gamma \in K(\eta_{i+1})^\perp \setminus K(\eta_{i})^\perp}{|\widehat{f}(\gamma)|}}\leq \|f\|_{A(G)}.
\end{equation*}
By the pigeonhole principle there is some $0\leq i_{0}<l$ such that
\begin{equation}\label{jt}
\|f\ast m_{K(\eta_{i_{0}})}-f\ast m_{K(\eta_{i_{0}+1})}\|_{\infty}\leq\|f\ast m_{K(\eta_{i_{0}})}-f\ast m_{K(\eta_{i_{0}+1})}\|_{A(G)}\leq\frac{M}{l}<\frac{1}{3}.
\end{equation}
By the triangle inequality and \ref{prop1} we have
\begin{align*}
\|(f\ast m_{K(\eta_{i_{0}})})_{\mathbb{Z}}-(f\ast m_{K(\eta_{i_{0}+1})})_{\mathbb{Z}}\|_\infty & \leq \|f\ast m_{K(\eta_{i_{0}})}-f\ast m_{K(\eta_{i_{0}+1})}\|_{\infty}\\ & \qquad  +d(f\ast m_{K(\eta_{i_{0}})},\mathbb{Z})+d(f\ast m_{K(\eta_{i_{0}+1})},\mathbb{Z})\\
& <\frac{1}{3} + 2\eta_{i_{0}}+2 \eta_{i_{0}+1} <\frac{1}{3}+\frac{4}{6}=1,
\end{align*}
and so $(f\ast m_{K(\eta_{i_{0}})})_{\mathbb{Z}}=(f\ast m_{K(\eta_{i_{0}+1})})_{\mathbb{Z}}$. Establishing this consequence is the purpose of the sequence of $\eta_i$s we defined earlier; going forward write $K:= K(\eta_{i_{0}})$ and $H:= K(\eta_{i_{0}+1})$, and then what we need (from what we have already shown) is:
\begin{enumerate}[label=(\alph*')]
\item \label{ctd}  $d(f\ast m_{K},\mathbb{Z})=d(f\ast m_H,\mathbb{Z}) \leq 2\eta_{{i_0}+1}$;
\item\label{cte} $\|f\ast m_{H}\|_{\infty}\geq\frac{1}{2}$;
\item \label{ctt} $\supp(f\ast m_{H})_{\mathbb{Z}}=\supp(f\ast m_{K})_{\mathbb{Z}}$ is a union of $F'(M,\eta_{i_{0}})$ cosets of $K$.
\end{enumerate}
Since $G$ is finite, the supremum in (\ref{dual}) is attained by the Heine-Borel Theorem and so (by Fourier inversion, again since $G$ is finite) there exists $\varphi:G\rightarrow\mathbb{C}$ satisfying $\|\widehat{\varphi}\|_{\infty}\leq 1$ and
\begin{equation}\label{wno}
\sum_{\gamma\in \widehat{G}}((f\ast m_{H})_{\mathbb{Z}})\widehat{\phantom{x}}(\gamma)\cdot\overline{\widehat{\varphi}(\gamma)}=\|(f\ast m_{H})_{\mathbb{Z}}\|_{A(G)}.
\end{equation}
Since $(f\ast m_{H})_{\mathbb{Z}}$ is constant on cosets of $K$ the Fourier transform of $(f\ast m_{H})_{\mathbb{Z}}$ is supported on $K^{\bot}$ (by Lemma \ref{anncount})  so we may assume that $\supp\widehat{\varphi}\subset K^{\bot}$.

Let $\mathcal{W}$ be the set of cosets of $K$ on which $(f\ast m_{K})_{\mathbb{Z}}$ is supported, so that by \ref{ctt} we have $|\mathcal{W}| \leq F'(M,\eta_{i_0})$ and $x \in  \supp (f \ast m_H)_\mathbb{Z}$ if and only if $x +K \in \mathcal{W}$. 

 The map
\begin{equation*}
\Phi:G/K \rightarrow \widehat{K^\perp}; x+K \mapsto (K^\perp \rightarrow S^1; \gamma \mapsto \gamma(x))
\end{equation*}
 is a well-defined isomorphism \cite[Theorem 2.1.2]{r} and so by the BPB construction (Theorem \ref{BPB}) applied to the subset $\Lambda:=\{\Phi(W):W \in \mathcal{W})\}$ of $\widehat{K^\perp}$ we get a trigonometric polynomial $g$ on $K^{\bot}$ (which although naturally discrete, is also finite and so compact) such that
 
\begin{enumerate*}[label=(\roman*)]
  \item\label{const} $\widehat{g}(\Phi(W))=1$ for all $W \in \mathcal{W}$;
  \item\label{g1} $\|g\|_{1}\leq 1+\varepsilon_1$; and
  \item\label{g3} $|\supp\widehat{g} |\leq\left(\frac{C}{\varepsilon_1}\right)^{2F'(M,\eta_{i_{0}})}$.
  \end{enumerate*}
  
  Define $\phi:G \rightarrow \mathbb{C}; x \mapsto \varphi(x)\widehat{g}(\Phi(x+K))$. Then we claim the following facts:
\begin{enumerate}[label=(\Alph*)]
  \item\label{k1} $ \langle (f\ast m_{H})_{\mathbb{Z}},\phi\rangle=\|(f\ast m_{H})_{\mathbb{Z}}\|_{A(G)}$.
 \begin{proof}$\widehat{g}(\Phi(x+K))=1$ when $x \in \supp (f \ast m_H)_{\mathbb{Z}}$ (by \ref{const} and the definition of $\mathcal{W}$) and then Parseval's Theorem gives
\begin{equation*}
\langle(f\ast m_{H})_{\mathbb{Z}},\phi\rangle =\langle (f \ast m_H)_{\mathbb{Z}},\varphi\rangle =\sum_{\gamma\in \widehat{G}}((f\ast m_{H})_{\mathbb{Z}})\widehat{\phantom{x}}(\gamma)\cdot\overline{\widehat{\varphi}(\gamma)},
\end{equation*}
and the claim follows by (\ref{wno}).\end{proof}
  \item\label{k2} $\|\widehat{\phi}\|_{\infty}\leq 1+\varepsilon_1$.
 \begin{proof} Since
   \begin{equation*}
   \widehat{g}(\Phi(x+K))=\frac{1}{|K^\perp|}\sum_{\gamma \in K^\perp}{g(\gamma)\Phi(x+K)(-\gamma)}=\frac{1}{|K^\perp|}\sum_{\gamma \in K^\perp}{g(\gamma)\gamma(-x)}
   \end{equation*}
   for all $x \in G$, we have
   \begin{align*}
   \widehat{\phi}(\lambda)& =\frac{1}{|G|}\sum_{x \in G}{\varphi(x)\widehat{g}(\Phi(x+K))\lambda(-x)}\\ &=\frac{1}{|K^\perp|}{\sum_{\gamma \in K^\perp}{g(\gamma)\frac{1}{|G|}\sum_{x \in G}{\varphi(x)\gamma(-x)\lambda(-x)}}} =\frac{1}{|K^\perp|}\sum_{\gamma \in K^\perp}{g(\gamma)\widehat{\varphi}(\gamma+\lambda)}.
   \end{align*}
   It follows by the triangle inequality that $\|\widehat{\phi}\|_\infty \leq \|g\|_1\|\widehat{\varphi}\|_\infty$ which gives the claim by \ref{g1} and design of $\widehat{\varphi}$.\end{proof}
  \item\label{k3} $\supp \widehat{\phi} \subset K^\perp$.
\begin{proof}As noted after (\ref{wno}) we may assume that $\supp \widehat{\varphi} \subset K^\perp$ and so by Lemma \ref{anncount} $\varphi$ is constant on cosets of $K$. The function $G \rightarrow \mathbb{C};x \mapsto \widehat{g}(\Phi(x+K))$ is visibly constant on cosets of $K$, and so $\phi$, which is the product of this function and $\varphi$, is constant on cosets of $K$ and hence, again by Lemma \ref{anncount} but in the other direction, $\supp \widehat{\phi} \subset K^\perp$.\end{proof}
  \item\label{k5} $\|\phi\|_{1}\leq F''(M,\eta_{i_{0}},\varepsilon_{1})$.
\begin{proof} By design, $\supp \phi \subset \{x \in G: \Phi(x+K) \in \supp \widehat{g}\}$, and so by \ref{g3} we have
\begin{equation}\label{firstcalc}
\mu_G(\supp \phi) \leq \mu_G(K) |\supp \widehat{g}|\leq \mu_G(K)\left(\frac{C}{\varepsilon_1}\right)^{2F'(M,\eta_{i_{0}})}.
\end{equation}
By the triangle inequality, Fourier inversion for $\phi$, and the triangle inequality again we have
\begin{equation*}
\|\phi\|_{1} \leq\|\phi\|_{\infty}\cdot \mu_G(\supp\phi) \leq \|\widehat{\phi}\|_\infty\cdot |\supp\widehat{\phi}| \cdot \mu_G(\supp \phi).
\end{equation*}
Hence by \ref{k2}, \ref{k3}, and (\ref{firstcalc}) we have
\begin{equation*}
\|\phi\|_1\leq (1+\varepsilon_1)\cdot |K^\perp|\cdot \mu_G(K)\left(\frac{C}{\varepsilon_1}\right)^{2F'(M,\eta_{i_{0}})}=F''(M,\eta_{i_0},\varepsilon_1)
\end{equation*}
since $|K^\perp|\mu_G(K)=1$ (Lemma \ref{anncount}).
\end{proof}
\end{enumerate}
By duality and then \ref{k2}, and then \ref{k1}, and then the triangle inequality, we have
\begin{align*}
\|f\ast m_{H}\|_{A(G)}& \geq\frac{1}{\|\widehat{\phi}\|_\infty}\left| \sum_{\gamma \in \widehat{G}}{(f \ast m_H)\widehat{\phantom{x}}(\gamma)\overline{\widehat{\phi}(\gamma)}}\right|  \geq\frac{|\langle f\ast m_{H},\phi\rangle |}{1+\varepsilon_{1}}\\ &\qquad \qquad 
=\frac{1}{1+\varepsilon_{1}}\left|\|(f\ast m_{H})_{\mathbb{Z}}\|_{A(G)}+\langle f\ast m_{H}-(f\ast m_{H})_{\mathbb{Z}},\phi\rangle\right|\\ &\qquad \qquad 
\geq \frac{1}{1+\varepsilon_{1}}\left(\|(f\ast m_{H})_{\mathbb{Z}}\|_{A(G)}-d(f\ast m_{H},\mathbb{Z})\|\phi\|_{1}\right).
\end{align*}
Hence by \ref{k5} and the fact that $d(f\ast m_{H},\mathbb{Z})\leq 2\eta_{i_{0}+1}$ (from \ref{ctd}) we get
\begin{equation}\label{wazo}
\|(f\ast m_{H})_{\mathbb{Z}}\|_{A(G)}\leq (1+\varepsilon_{1})\|f\ast m_{H}\|_{A(G)}+2\eta_{i_{0}+1}F''(M,\eta_{i_{0}},\varepsilon_{1}).
\end{equation}
On the other hand, since $\widehat{m_H}=1_{H^\perp}$, we have 
\begin{equation}\label{ju}
\|f\|_{A(G)}=\|f\ast m_{H}\|_{A(G)}+\|f-f\ast m_{H}\|_{A(G)},
\end{equation}
and since $\|f\ast m_{H}\|_{A(G)}\geq\|f\ast m_{H}\|_{\infty}\geq\frac{1}{2}$ (by \ref{cte}) we have
\begin{equation}\label{zalind}
\|f-f\ast m_{H}\|_{A(G)}\leq \|f\|_{A(G)}-\frac{1}{2}\leq M-\frac{1}{2}.
\end{equation}
By the triangle inequality $d(f-f\ast m_{H},\mathbb{Z})\leq 2\eta_{i_{0}+1}+\delta_{m+1}(\varepsilon_1,\varepsilon_2,M) \leq 3\eta_{{i_0}+1}$. Now $\eta_{{i_0}+1} \leq \eta_0 \leq \frac{1}{3}\delta_m(\varepsilon_1,\frac{\varepsilon_2}{2},M-\frac{1}{2})$, and hence
\begin{equation*}
\|f-f\ast m_{H}\|_{A(G)}\leq M-\frac{1}{2} \text{ and } d(f-f\ast m_{H},\mathbb{Z})\leq \delta_m(\varepsilon_1,\frac{\varepsilon_2}{2},M-\frac{1}{2}).
\end{equation*}
It follows by the inductive hypothesis ($P_m$), that 
\begin{equation}\label{tind}
\|(f-f\ast m_{H})_{\mathbb{Z}}\|_{A(G)}\leq (1+\varepsilon_{1})\|f-f\ast m_{H}\|_{A(G)}+\frac{\varepsilon_{2}}{2}.
\end{equation}
Since $d(f,\mathbb{Z}),d(f\ast m_H,\mathbb{Z}),d(f-f\ast m_H,\mathbb{Z})<\frac{1}{3}$, by Lemma \ref{aproxadd}, $f_{\mathbb{Z}}=(f\ast m_{H})_{\mathbb{Z}}+(f-f\ast m_{H})_{\mathbb{Z}}$ so by the triangle inequality
\begin{equation*}
\|f_{\mathbb{Z}}\|_{A(G)}\leq \|(f\ast m_{H})_{\mathbb{Z}}\|_{A(G)}+\|(f-f\ast m_{H})_{\mathbb{Z}}\|_{A(G)},
\end{equation*}
and then using estimates (\ref{wazo}) and (\ref{tind}), followed by the identity (\ref{ju})
\begin{align*}
\|f_{\mathbb{Z}}\|_{A(G)} & \leq (1+\varepsilon_{1})\|f\ast m_{H}\|_{A(G)}+2\eta_{i_{0}+1}F''(M,\eta_{i_{0}},\varepsilon_{1})\\ &\qquad + (1+\varepsilon_{1})\|f-f\ast m_{H}\|_{A(G)}+\frac{\varepsilon_{2}}{2}\\
& =(1+\varepsilon_{1})\|f\|_{A(G)}+2\eta_{i_{0}+1}F''(M,\eta_{i_{0}},\varepsilon_{1})+\frac{\varepsilon_{2}}{2} \leq (1+\varepsilon_{1})\|f\|_{A(G)}+\varepsilon_2
\end{align*}
since $\eta_{i_{0}+1}\leq \frac{1}{4}\varepsilon_{2}F''(M,\eta_{i_{0}},\varepsilon_{1})^{-1}$. This proves ($P_{m+1}$), and the result follows by induction.
\end{proof}

Theorem \ref{tws} can be easily extended to complex-valued functions: Certainly $d(f,\mathbb{Z})\geq\max\{d(\operatorname{Re}f,\mathbb{Z}),\|\mathrm{Im}f\|_{\infty}\}$, and so $f_\mathbb{Z}=(\operatorname{Re} f)_\mathbb{Z}$ while $\|\operatorname{Re}f\|_{A(G)}=\|\frac{f+\overline{f}}{2}\|_{A(G)}\leq \|f\|_{A(G)}$ and so we can apply Theorem \ref{tws} to $\operatorname{Re}f$ to get the result.

Finally we need a standard limiting argument to extend Theorem \ref{tws} to infinite groups.

\begin{thm*}[Theorem \ref{skn}]
There is a function $\delta':(0,1]\times (0,1]\times [1,\infty) \rightarrow (0,1/2)$ such that if $M\geq 1$ and $\varepsilon_1,\varepsilon_2\in (0,1]$, $G$ is compact, $\mu \in M(G)$ has real Fourier-Stieltjes transform with $\|\mu\| \leq M$, $S:=\supp \left(\widehat{\mu}\right)_{\mathbb{Z}}$ is finite, and $d(\widehat{\mu},\mathbb{Z}) \leq \delta'(\varepsilon_1,\varepsilon_2,M)$, then the measure $\mu_\mathbb{Z}$ defined by $d\mu_\mathbb{Z}(x)=\sum_{\gamma \in S}{(\widehat{\mu})_\mathbb{Z}(\gamma)\gamma(x) d\mu_G(x)}$ has $\|\mu_{\mathbb{Z}}\|\leq (1+\varepsilon_{1})\|\mu\|+\varepsilon_{2}$.
\end{thm*}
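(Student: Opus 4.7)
The plan is to deduce Theorem~\ref{skn} from Theorem~\ref{tws} (and its complex-valued extension noted immediately after it) by a two-stage limiting argument. The starting observation is that when $G$ itself is finite, Theorem~\ref{skn} reduces directly to Theorem~\ref{tws}: the Pontryagin duality identities $\|\widehat\mu\|_{A(\widehat G)} = \|\mu\|_{M(G)}$ and $\|(\widehat\mu)_{\mathbb Z}\|_{A(\widehat G)} = \|\mu_{\mathbb Z}\|_{M(G)}$ follow from Fourier inversion, so applying Theorem~\ref{tws} to $\widehat\mu : \widehat G \to \mathbb R$ on the finite group $\widehat G$ yields the conclusion with $\delta'(\varepsilon_1,\varepsilon_2,M) = \delta(\varepsilon_1,\varepsilon_2,M)$. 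The rest of the proof extends this to general compact $G$.

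The first reduction is to the case where $\widehat G$ is finitely generated. Let $\Lambda := \langle S \rangle \leq \widehat G$, which is finitely generated since $S$ is finite, and set $H := \Lambda^\perp \leq G$ with $\pi : G \to G/H$ the quotient. The push-forward $\widetilde\mu := \pi_*\mu \in M(G/H)$ satisfies $\|\widetilde\mu\| \leq \|\mu\|$, $\widehat{\widetilde\mu} = \widehat\mu|_\Lambda$, and the rounded density on $G/H$ pulls back via $\pi$ to the density of $\mu_{\mathbb Z}$ on $G$ (since every character in $S \subset \Lambda$ factors through $G/H$), giving $\|\widetilde\mu_{\mathbb Z}\| = \|\mu_{\mathbb Z}\|$. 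So we may assume $G \cong \mathbb T^r \times \widehat{F_0}$ and $\widehat G \cong \mathbb Z^r \times F_0$ for some $r \geq 0$ and finite abelian $F_0$. For the second reduction, introduce the finite subgroups $K_N := (\tfrac{1}{N}\mathbb Z/\mathbb Z)^r \times \widehat{F_0} \leq G$ with annihilator $K_N^\perp = N\mathbb Z^r \times \{0\}$, so $\widehat{K_N} \cong \widehat G / K_N^\perp$ and, for $N$ large, $S$ injects into $\widehat{K_N}$ via a centred fundamental domain. Discretise $\mu$ to a measure $\mu_N \in M(K_N)$ by $\mu_N(\{x\}) := \int_G W_N(y - x)\,d\mu(y)$, where the kernel $W_N \geq 0$ on $G$ is chosen so that (i) $\sum_{x \in K_N} W_N(\cdot + x) \equiv 1$ (giving $\|\mu_N\|_{M(K_N)} \leq \|\mu\|$), and (ii) $\widehat{W_N}$ is supported strictly inside a fundamental domain for $K_N^\perp$ in $\widehat G$ (forcing the $K_N$-periodisation of each $W_N \gamma_0$ to equal its constant average). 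A concrete choice is $W_N := \mathbf{1}_F \ast F_L$, with $F := (-\tfrac{1}{2N}, \tfrac{1}{2N}]^r \times \{0\}$ a centred fundamental domain for $K_N$ and $F_L$ a product Fej\'er kernel of order $L = \lfloor N/2 \rfloor - 1$ in the $\mathbb T^r$ coordinates (and the identity in the $\widehat{F_0}$ coordinate): non-negativity of each factor gives (i), and $\supp \widehat{F_L} \subset (-L, L)^r$ gives (ii).

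Property (ii) then yields the exact identity $\widehat{\mu_N}(\bar\gamma) = |K_N|\widehat{W_N}(-\gamma_0)\,\widehat\mu(\gamma_0)$ for each $\gamma_0$ in the fundamental domain, where $|K_N|\widehat{W_N}(-\gamma_0)$ is a real product of sinc and Fej\'er factors (real because $F$ is centred and $F_L \geq 0$), tending to $1$ uniformly on $S$ as $N \to \infty$. Hence $\widehat{\mu_N}$ is real-valued for every $N$, and for $N$ sufficiently large $d(\widehat{\mu_N}, \mathbb Z) \leq \delta(\varepsilon_1, \varepsilon_2, M)$ (the function from Theorem~\ref{tws}), while $(\widehat{\mu_N})_{\mathbb Z}$ corresponds bijectively to $(\widehat\mu)_{\mathbb Z}|_S$ under $S \hookrightarrow \widehat{K_N}$; arranging this requires $\delta'$ taken a little smaller than $\delta$ so as to absorb the deterministic discretisation error, whose size is controlled by $\|\mu\| \leq M$ and $|S|$. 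Applying the finite-group case of Theorem~\ref{skn} (established in the first paragraph) to $\mu_N$ gives $\|(\mu_N)_{\mathbb Z}\|_{M(K_N)} \leq (1 + \varepsilon_1)\|\mu_N\| + \varepsilon_2 \leq (1 + \varepsilon_1)\|\mu\| + \varepsilon_2$; and since $(\mu_N)_{\mathbb Z}(\{x\}) = |K_N|^{-1} f_{\mathbb Z}(x)$ for $f_{\mathbb Z}$ the trigonometric-polynomial density of $\mu_{\mathbb Z}$, the left-hand side equals the Riemann sum $\tfrac{1}{|K_N|}\sum_{x \in K_N}|f_{\mathbb Z}(x)|$, which converges to $\int_G |f_{\mathbb Z}|\,d\mu_G = \|\mu_{\mathbb Z}\|$ as $N \to \infty$ by the uniform continuity of $f_{\mathbb Z}$ and the equidistribution of $K_N$ in $G$. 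Passing to the limit concludes the proof.

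The main obstacle is the joint calibration of $W_N$ (choice of $F$ and Fej\'er order $L$) so that all three hypotheses on $\mu_N$—real Fourier transform, distance to $\mathbb Z$ below the threshold of Theorem~\ref{tws}, and the $M(K_N)$-norm bound—hold simultaneously and uniformly enough in $N$ for a limit to be taken. The centring of $F$ is essential for killing the phase factors in the sinc contributions (which would otherwise break the reality of $\widehat{\mu_N}$), and the condition $\supp \widehat{F_L} \subset (-L, L)^r$ with $L < N/2$ upgrades the approximate periodisation identity to an exact one, yielding the clean real product $|K_N|\widehat{W_N}(-\gamma_0)$ that tends to $1$ on $S$ with the quantifiable error needed to verify $d(\widehat{\mu_N},\mathbb Z) \leq \delta$.
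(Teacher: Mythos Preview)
Your argument is correct and reaches the same conclusion as the paper, but the two reductions to the finite case are organised differently.

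The paper first uses the BPB construction (Theorem~\ref{BPB}) to convolve $\mu$ with a trigonometric polynomial $p$ satisfying $\widehat p=1$ on $S$, $\|p\|_1\leq 1+\tfrac{1}{3}\varepsilon_1$, and $\Lambda:=\supp\widehat p$ finite.  The resulting $\nu:=p\ast\mu$ has \emph{finitely supported} Fourier--Stieltjes transform, so passing to the finitely generated group $\langle\Lambda\rangle\cong\widehat H\times\mathbb{Z}^d$ and then to $\widehat H\times\mathbb{Z}_N^d$ is a pure relabelling for $N$ large: $g$ is just $\widehat\nu$ read off on a fundamental domain, and the Riemann-sum identity $\|g\|_{A(\widehat H\times\mathbb{Z}_N^d)}\to\|\nu\|$ follows with no smoothing kernel required.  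Your route instead pushes $\mu$ forward to $G/\langle S\rangle^\perp$ (so $\widehat{\widetilde\mu}$ may still have infinite support) and then discretises $\widetilde\mu$ primally on the finite subgroup $K_N$ via the kernel $W_N=\mathbf 1_F\ast F_L$, engineered so that the partition-of-unity property gives $\|\mu_N\|\leq\|\mu\|$ while the Fourier-side support condition on $F_L$ forces the exact identity $\widehat{\mu_N}(\bar\gamma)=c_{\gamma_0}\widehat\mu(\gamma_0)$ with real $c_{\gamma_0}\to 1$ on $S$ and $|c_{\gamma_0}|\leq 1$ everywhere.  The trade-off is that the paper's approach is shorter (BPB does the heavy lifting and is already in hand) at the cost of the harmless $\tfrac{1}{3}\varepsilon_1$ loss, whereas yours avoids invoking BPB but pays for it with the explicit kernel construction and the verification that $c_{\gamma_0}$ is real and has the right asymptotics.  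One small remark: your parenthetical ``whose size is controlled by $\|\mu\|\leq M$ and $|S|$'' could be misread as saying $\delta'$ must depend on $|S|$; in fact (as your argument shows) the discretisation error tends to $0$ with $N$, so it suffices to fix $\delta'$ as any value strictly below $\delta$ and then take $N$ large depending on $S$.
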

\begin{proof}
We take $\delta'(\varepsilon_1,\varepsilon_2,M):=\frac{1}{2}\delta(\frac{1}{3}\varepsilon_1,\varepsilon_2,2M)$ where $\delta$ is as in Theorem \ref{tws}.  Apply the BPB construction (Theorem \ref{BPB}) to $S$ to get a trigonometric polynomial $p$ such that:

\begin{enumerate*}[label=(\roman*)]
  \item $\widehat{p}(\gamma)=1$ for $\gamma\in S$;
  \item $\|p\|_{1}\leq 1+\frac{1}{3}\varepsilon_1$; and
  \item $\Lambda:=\supp \widehat{p}$ is finite. 
\end{enumerate*}

Let $\nu:=p \ast \mu$ and note that $\supp \widehat{\nu} \subset \Lambda$ is finite and $\|\nu\|\leq \|p\|_1 \|\mu\| \leq (1+\frac{1}{3}\varepsilon_1)M$. If $\widehat{\nu}(\gamma) \neq 0$ then either $\gamma \in S$ and $\widehat{\nu}(\gamma)=\widehat{\mu}(\gamma)$; or $\gamma \not \in S$ and so $|\widehat{\nu}(\gamma)|=|\widehat{p}(\gamma)||\widehat{\mu}(\gamma)| \leq 2|\widehat{\mu}(\gamma)| \leq 2d(\widehat{\mu},\mathbb{Z})$. In summary, $d(\widehat{\nu},\mathbb{Z}) \leq \delta(\frac{1}{3}\varepsilon_1,\varepsilon_2,2M)$ and $(\widehat{\nu})_\mathbb{Z} = (\widehat{\mu})_\mathbb{Z}$.

Let $\Gamma$ be the subgroup of $\widehat{G}$ generated by $\Lambda$, which is then finitely generated, and so by the structure theorem for finitely generated Abelian groups there is a finite Abelian group $H$ and an isomorphism $\Gamma \rightarrow \widehat{H} \times \mathbb{Z}^d$. 

Writing $K:=\{x \in G: \gamma(x)=1\text{ for all }\gamma \in \Gamma\}$ we have by \cite[Theorem 2.1.2]{r} that the dual of $G/K$ is homeomorphically isomorphic to $\Gamma$, and hence there is a homeomorphic isomorphism $\pi:G/K \rightarrow H \times (\mathbb{R}/\mathbb{Z})^d$. For each $\gamma \in \Gamma$ there is a unique $\tilde{\gamma} \in (H \times (\mathbb{R}/\mathbb{Z})^d)^\wedge$ such that $\gamma=\tilde{\gamma}\circ \pi\circ q$ and there is a unique $(\chi_\gamma ,r_1(\gamma),\dots,r_d(\gamma)) \in \widehat{H} \times \mathbb{Z}^d$ such that
\begin{equation}\label{egam}
\tilde{\gamma}(h,\theta_1,\dots,\theta_d)=\chi_\gamma(h)\exp(2\pi i r_1(\gamma)\theta_1)\cdots \exp(2\pi i r_d(\gamma)\theta_d).
\end{equation}
Define $g,h:\widehat{H} \times \mathbb{Z}_N^d\rightarrow \mathbb{R}$ by
\begin{equation*}
g(\chi,r_1,\dots,r_d):=\sum_{\substack{\gamma \in \Lambda\\ \chi_\gamma=\chi\\ r_1(\gamma) \equiv r_1\bmod N\\ \vdots \\ r_d(\gamma)\equiv r_d \bmod N}}{\widehat{\nu}(\gamma)}\text{ and } h(\chi,r_1,\dots,r_d):=\sum_{\substack{\gamma \in \Lambda \\ \chi_\gamma=\chi\\ r_1(\gamma) \equiv r_1\bmod N\\ \vdots \\ r_d(\gamma)\equiv r_d \bmod N}}{(\widehat{\mu})_\mathbb{Z}(\gamma)}
\end{equation*}
Since $\Lambda$ is finite there is $N_0$, such that if $N \geq N_0$ then there is at most one non-zero summand in the sums above for any $(\chi ,r_1,\dots,r_d) \in\widehat{H}\times \mathbb{Z}_N^d$. In particular $d(g,\mathbb{Z}) \leq d(\widehat{\nu},\mathbb{Z}) \leq \delta(\frac{1}{3}\varepsilon_1,\varepsilon_2,2M)$, and $g_\mathbb{Z}=h$ (since we saw earlier that $(\widehat{\nu})_\mathbb{Z}=(\widehat{\mu})_\mathbb{Z}$).

The map $\Phi:H \times \mathbb{Z}_N^d \rightarrow (\widehat{H}\times \mathbb{Z}_N^d)^\wedge$ defined by
\begin{equation*}
\Phi(h,x_1,\dots,x_d) (\chi,r_1,\dots,r_d)=\chi(h)\exp\left(2\pi i \frac{x_1r_1}{N}\right)\cdots \exp\left(2\pi i \frac{x_dr_d}{N}\right)
\end{equation*}
is a bijection and
\begin{equation*}
\widehat{g}\left(\Phi\left(h,x_1,\dots,x_d\right)\right)=\frac{1}{|\widehat{H}\times \mathbb{Z}_N^d|}\sum_{\lambda \in \Lambda}{\widehat{\nu}(\lambda)\chi_\lambda(h)\exp\left(2\pi i \frac{x_1r_1(\lambda)}{N}\right)\cdots \exp\left(2\pi i \frac{x_dr_d(\lambda)}{N}\right)},
\end{equation*}
so
\begin{equation*}
\|g\|_{A(\widehat{H}\times \mathbb{Z}_N^d)}=\frac{1}{|H|N^d}\sum_{h \in H}{\sum_{x_1,\dots,x_d=1}^N{\left|\sum_{\lambda \in \Lambda}{\widehat{\nu}(\lambda) \chi_\lambda(h)\exp\left(2\pi i \frac{x_1r_1(\lambda)}{N}\right)\cdots \exp\left(2\pi i \frac{x_dr_d(\lambda)}{N}\right)}\right|}}.
\end{equation*}
The inner sum here is a $d$-dimensional Riemann sum and we have
\begin{align*}
\|g\|_{A(\widehat{H}\times \mathbb{Z}_N^d)}& \rightarrow \frac{1}{|H|}\sum_{h \in H}{\int{\left|\sum_{\lambda \in \Lambda}{\widehat{\nu}(\lambda) \chi_\lambda(h)\exp\left(2\pi i \theta_1r_1(\lambda)\right)\cdots \exp\left(2\pi i \theta_d r_d(\lambda)\right)}\right|d\theta_1\dots d\theta_d}}\\ & = \int{\left|\sum_{\lambda \in \Lambda}{\widehat{\nu}(\lambda)\tilde{\lambda}(h,\theta_1,\dots,\theta_d)}\right|d\mu_{H \times (\mathbb{R}/\mathbb{Z})^d}(h,\theta_1,\dots,\theta_d)}\\
&= \int{\left|\sum_{\lambda \in \Lambda}{\widehat{\nu}(\lambda)\tilde{\lambda}\circ \pi (z)}\right|d\mu_{G/K}(z)}= \int{\left|\sum_{\lambda \in \Lambda}{\widehat{\nu}(\lambda)\lambda(z)}\right|d\mu_{G}(z)}=\|\nu\|.
\end{align*}
Similarly $\|g_\mathbb{Z}\|_{A(\widehat{H}\times \mathbb{Z}_N^d)}\rightarrow \|\mu_\mathbb{Z}\|$.

Let $N_1 \geq N_0$ be large enough that $\|g\|_{A(\widehat{H}\times \mathbb{Z}_N^d)} \leq 2M$ for all $N \geq N_1$. Then by Theorem \ref{tws} we have $\|g_\mathbb{Z}\|_{A(\widehat{H}\times \mathbb{Z}_N^d)} \leq \left(1+\frac{1}{3}\varepsilon_1\right)\|g\|_{A(\widehat{H}\times \mathbb{Z}_N^d)}+\varepsilon_2$. Taking limits we have
\begin{equation*}
\|\mu_\mathbb{Z}\|\leq  \left(1+\frac{1}{3}\varepsilon_1\right)\|\nu\|+\varepsilon_2 \leq \left(1+\frac{1}{3}\varepsilon_1\right)^2\|\mu\| + \varepsilon_2 \leq (1+\varepsilon_1)\|\mu\|+\varepsilon_2.
\end{equation*}
The result is proved.
\end{proof}
An attempt to remove the strong continuity requirement from Theorem \ref{glow} using the presented approach fails at a very basic level -- we are not able to define the measure $\mu_\mathbb{Z}$ in the statement of Theorem \ref{skn}. 

\begin{thm}\label{glowrefined}
There is a function $\delta:(0,1] \rightarrow (0,1]$ such that if $(a_n)_{n=1}^\infty$ is a sequence of positive reals with $a_1 \leq 1$ and $a_{n+1} \leq a_n\delta(a_n)$ for all $n \geq 1$ then, for $G$ compact, and $\mu\in M(G)$ strongly continuous with $\widehat{\mu}(\widehat{G})\subset\{a_{n}:n\in\mathbb{N}\}\cup\{0\}$ and $\|\mu\|\leq 1$, we have $\mu\ast\mu\in L^{1}(G)$.
\end{thm}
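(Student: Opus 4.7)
I would follow the blueprint of the proof of Theorem \ref{najp}, replacing the single step where $|\supp\widehat{f_n}|$ is bounded via the size estimate in Theorem \ref{pom} (producing (\ref{torsionuse}))---which is unavailable once $\widehat{G}$ has torsion---by a direct inductive application of Theorem \ref{skn}. Set $A_k:=\{\gamma\in\widehat{G}:\widehat{\mu}(\gamma)=a_k\}$ as before; since $\widehat{\mu}$ is real-valued (the $a_k$ are positive reals), Theorem \ref{skn} will apply to the objects on the right below. I would choose $\delta$, purely as a function of $a$, satisfying simultaneously: (i) $\delta(a)\leq\tfrac{1}{6}$, which forces $a_n\leq 6^{-(n-1)}$; (ii) $\delta(a)\leq\exp(-17a^{-2})$, which lets Theorem \ref{pom} applied to $a_n^{-1}\mu$ with $r=\lceil 17a_n^{-2}\rceil$ conclude that each $A_n$ is finite; and (iii) $\delta(a)\leq\delta'(1,1,6a^{-2})$, where $\delta'$ is as in Theorem \ref{skn}.

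The main inductive construction builds $\mu_n\in L^1(G)$ with $\widehat{\mu_n}=1_{A_n}$. Put $\nu_0:=0$ and, given $\mu_1,\dots,\mu_{n-1}$, set $\nu_{n-1}:=\sum_{k<n}a_k\mu_k\in L^1(G)$. Then $a_n^{-1}(\mu-\nu_{n-1})$ has real Fourier-Stieltjes transform equal to $1$ on the finite set $A_n$, at most $a_{n+1}/a_n\leq\delta(a_n)<\tfrac{1}{2}$ on $\bigcup_{k>n}A_k$, and $0$ elsewhere; so its nearest-integer rounding is exactly $1_{A_n}$, with $d$-distance at most $\delta(a_n)$. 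Applying Theorem \ref{skn} with $\varepsilon_1=\varepsilon_2=1$ and $M_n:=a_n^{-1}\|\mu-\nu_{n-1}\|\leq a_n^{-1}(1+\|\nu_{n-1}\|)$ yields $\mu_n$ satisfying
\[
\|\mu_n\|\leq 2a_n^{-1}(1+\|\nu_{n-1}\|)+1,
\]
conditional on $\delta(a_n)\leq\delta'(1,1,M_n)$.

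A short auxiliary induction controls the growth: the above bound gives $\|\nu_n\|\leq \|\nu_{n-1}\|+a_n\|\mu_n\|\leq 3\|\nu_{n-1}\|+3$, so $\|\nu_n\|=O(3^n)$; combining with $a_n\leq 6^{-(n-1)}$ and $\log_6 3<1$ yields $M_n\leq 6a_n^{-2}$, which closes condition (iii) above. Finally, exactly as in Theorem \ref{najp}, put $\rho_n:=\sum_{k\leq n}a_k^2\mu_k\in L^1(G)$. The bound $a_n\|\mu_n\|=O(3^n)$ together with $a_n\leq 6^{-(n-1)}$ gives $a_n^2\|\mu_n\|=O(2^{-n})$, which is summable. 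Hence $(\rho_n)$ is Cauchy in the closed ideal $L^1(G)\subset M(G)$, its limit $\rho$ satisfies $\widehat{\rho}(\gamma)=\widehat{\mu}(\gamma)^2$ by pointwise convergence of the Fourier transforms (the $A_k$ are disjoint), and so $\rho=\mu\ast\mu\in L^1(G)$.

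The main obstacle I expect is the apparent chicken-and-egg dependence: the smallness required of $\delta(a_n)$ is $\delta(a_n)\leq\delta'(1,1,M_n)$, but $M_n$ depends on $\|\nu_{n-1}\|$, which was itself built using Theorem \ref{skn} at earlier steps. The observation that resolves this is that $\|\nu_n\|$ grows only geometrically at rate $3$, while $1/a_n$ grows at rate at least $6$, so $M_n$ is a function of $a_n$ alone (bounded by $6a_n^{-2}$), and the corresponding smallness can therefore be hard-wired into $\delta$ from the outset with no reference to $n$.
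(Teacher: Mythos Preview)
Your proposal is correct and follows essentially the same route as the paper: apply Theorem~\ref{pom} for finiteness of the level sets, apply Theorem~\ref{skn} inductively to bound $\|\mu_n\|$, and then sum $a_n^2\|\mu_n\|$ to get a Cauchy sequence in $L^1(G)$. The only real difference is bookkeeping: the paper inducts directly on the $a_n$-dependent bound $\|\mu-\nu_{n-1}\|\le a_n^{-1/2}$ (so that Theorem~\ref{skn} is invoked with $M=a_n^{-3/2}$ and the inductive step closes via $3a_n^{-1/2}\le a_{n+1}^{-1/2}$), whereas you induct on the $n$-dependent bound $\|\nu_n\|=O(3^n)$ and then convert back to a function of $a_n$ using $a_n\le 6^{-(n-1)}$ and $\log_6 3<1$; both handle the circularity you flag in the same way, by showing the norm parameter fed into $\delta'$ is bounded by a fixed power of $a_n^{-1}$.
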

\begin{proof}
Define 
\begin{equation*}
\delta(x) = \min\left\{\frac{1}{9}\exp(-18x^{-2}),\delta'\left(\frac{1}{2},\frac{1}{2},x^{-3/2}\right)\right\}
\end{equation*}
where $\delta'$ is as in Theorem \ref{skn}. Note by induction that $a_n \leq 9^{-(n-1)}$ for all $n \geq 1$.

Suppose that $\mu$ satisfies the hypotheses of the theorem and set
\begin{equation*}
A_{n}=\{\gamma\in\widehat{G}:\widehat{\mu}(\gamma)=a_{n}\}.
\end{equation*}
Of course, $A_{j}\cap A_{k}=\emptyset$ for $j\neq k$. Apply Theorem \ref{pom} with $r =  \lceil17a_n^{-2}\rceil$ to the measure $a_n^{-1}\mu$ which is strongly continuous and has $\|a_n^{-1}\mu\| <\frac{1}{4}\sqrt{r}$ and $|a_n^{-1}\widehat{\mu}(\gamma)| \leq \exp(-r)$ for all $\gamma \not \in \bigcup_{k=1}^n{A_k}$. This tells us that $\bigcup_{k=1}^n{A_k}$ is finite.

Define $\mu_n$ by Fourier inversion such that $\widehat{\mu_n}=1_{A_n}$ and $\tau_n \in M(G)$ by
\begin{equation*}
\tau_n:=\mu-\sum_{k=1}^{n-1}{a_{k}\mu_k}.
\end{equation*}
This rearranges to
\begin{equation}\label{tozs}
   a_n^{-1}\tau_{n}=\mu_{n}+a_n^{-1}\tau_{n+1}.
\end{equation}

We shall show by induction that $\|\tau_n\| \leq a_n^{-1/2}$ and $\|\mu_{n-1}\|\leq \frac{3}{2}a_{n-1}^{-3/2}+\frac{1}{2}$ (with $\mu_0$ the zero measure). This is certainly true for $n=1$; now suppose we are at step $n$ of the induction. Since $\widehat{\mu_n}$ takes the values $0$ and $1$, and $|\widehat{\tau}_{n+1}(\gamma)|\leq a_{n+1}$ for all $\gamma \in \widehat{G}$, we have from (\ref{tozs}) that $d\left(a_n^{-1}\widehat{\tau_{n}},\mathbb{Z}\right) \leq \frac{a_{n+1}}{a_{n}}$ and $\left(a_n^{-1}\widehat{\tau_{n}}\right)_{\mathbb{Z}}=\widehat{\mu_n}$. Apply Theorem \ref{skn} with $\varepsilon_1=\varepsilon_2=\frac{1}{2}$ and $M=a_n^{-3/2}$ to $a_n^{-1}\tau_n$ which certainly has real Fourier-Stieltjes transform and $\|a_n^{-1}\tau_n\| \leq a_n^{-3/2}$, to get $ \|\mu_n\|\leq \frac{3}{2}a_n^{-3/2} +\frac{1}{2}$. This is the second part of the inductive hypothesis for $n+1$. But it also implies
\begin{equation*}
\|\tau_{n+1}\|\leq \|\tau_{n}\|+a_{n}\|\mu_n\|\leq a_n^{-1/2} + \frac{3}{2}a_n^{-1/2} + \frac{1}{2}a_n \leq 3a_n^{-1/2}\leq a_{n+1}^{-1/2}
\end{equation*}
as required.

Finally, let $\rho_n:=\sum_{k=1}^n{a_k^2\mu_k}$.  Then
\begin{equation}\label{cauchy2}
\|\rho_n-\rho_m\| \leq \sum_{k=m+1}^n{a_k^2\|\mu_k\|} \leq \sum_{k=m+1}^n{2a_k^{1/2}} \leq 2\sum_{k=m+1}^n{3^{-k}} \leq 3^{-m} \text{ for }n>m.
\end{equation}
Hence $(\rho_n)_n$ is Cauchy and so converges. Since their Fourier-Stieltjes transforms have finite support, the measures $\rho_n$ are also in $L^1(G)$, which is a closed ideal in $M(G)$ and $M(G)$ is complete, so $\rho_n \rightarrow \rho$ for some $\rho \in L^1(G)$.

Since the $A_k$s are disjoint we have $\widehat{\rho_n}(\gamma) \rightarrow \widehat{\mu}(\gamma)^2$ for $\gamma \in \widehat{G}$ and hence $\widehat{\rho}(\gamma)=\widehat{\mu}(\gamma)^2$ for all $\gamma \in \widehat{G}$, and hence by the uniqueness of the Fourier transform $\rho = \mu \ast \mu$ and the result is proved since $\rho \in L^1(G)$.
\end{proof}

\end{document}